\newcommand{\mem}[1]{\in\mathbb{#1}}
\begin{document}
\title{Algebraic and transcendental solutions\\of some exponential equations}
\author{Jonathan Sondow\inst1, Diego Marques\inst2}
\institute{\inst1209 West 97th Street, New York, NY 10025 USA
 \AND \inst2Departamento de Matem\'atica, Universidade de Bras\' ilia, Brazil
           }
\address{\textbf{Jonathan Sondow}\\ 209 West 97th Street, New York, NY 10025 USA\\ e-mail: \texttt{jsondow@alumni.princeton.edu}\AND
          \textbf{Diego Marques}\\ Departamento de Matem\'atica, Universidade de Bras\' ilia, DF, Brazil\\ e-mail: \texttt{diego@mat.unb.br}}
\maketitle

\begin{abstract}
We study algebraic and transcendental powers of
 positive real numbers, including solutions of each of the equations
$x^x=y$, $x^y=y^x$, $x^x=y^y$, $x^y=y$, and $x^{x^y}=y$.
 Applications to values of the iterated
exponential functions are given. The main tools used are classical
 theorems
of
Hermite-Lindemann and Gelfond-Schneider, together with solutions of
exponential Diophantine equations.
\keywords Algebraic, irrational,
transcendental, Gelfond-Schneider Theorem, Hermite-Lindemann
Theorem, iterated exponential.
\AMSclassificationnumber Primary 11J91,
 Secondary
11D61.
\end{abstract}

\section{Introduction}\label{section:1}
Transcendental number theory began in 1844 with Liouville's explicit construction of the first transcendental numbers. In 1872 Hermite proved that $e$ is transcendental, and in 1884 Lindemann extended Hermite's method to prove that $\pi$ is also transcendental. In fact, Lindemann proved a more general result.

\begin{theorem} [Hermite-Lindemann] \label{Lindemann}
The number $e^{\alpha}$ is transcendental for any nonzero algebraic number $\alpha$.
\end{theorem}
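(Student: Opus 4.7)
The plan is to follow Hermite's classical auxiliary-polynomial method. Assume toward a contradiction that $\alpha\neq 0$ is algebraic and that $\beta=e^{\alpha}$ is also algebraic. Since $\beta\neq 0$, it satisfies a polynomial equation with integer coefficients, giving a relation
$$b_0+b_1e^{\alpha}+\cdots+b_me^{m\alpha}=0,\qquad b_0,b_m\in\mathbb{Z}\setminus\{0\}.$$
The aim is to manufacture, for each large prime $p$, a nonzero rational integer that is simultaneously smaller than $1$ in absolute value.

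The engine is Hermite's integration-by-parts identity: for any polynomial $f$ of degree $r$, writing $F(t)=\sum_{j=0}^{r}f^{(j)}(t)$, one has
$$\int_0^t e^{t-u}f(u)\,du = e^t F(0)-F(t).$$
Evaluating this at $t=k\alpha$ and combining with the coefficients $b_k$ makes the terms $e^{k\alpha}F(0)$ cancel thanks to the assumed relation, leaving
$$\sum_{k=0}^{m}b_kF(k\alpha) = -\sum_{k=0}^{m}b_k\int_0^{k\alpha}e^{k\alpha-u}f(u)\,du.$$

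I would then pick an auxiliary polynomial of the shape
$$f_p(x)=\frac{c^{\,N}\, x^{p-1}\prod_{k=1}^{m}(x-k\alpha)^p}{(p-1)!},$$
where $c$ is a common denominator for the relevant algebraic integers and $N$ is chosen to clear denominators. On the analytic side, trivial bounds on the integrand combined with the $(p-1)!$ in the denominator give a decay of type $O(C^{p}/(p-1)!)\to 0$. On the arithmetic side, every $f_p^{(j)}(k\alpha)$ with $k\geq 1$, and every $f_p^{(j)}(0)$ with $j\neq p-1$, contributes an algebraic integer divisible by $p$, while $f_p^{(p-1)}(0)$ is an algebraic integer \emph{not} divisible by $p$ once $p$ is larger than $|b_0|$, $c$, and the relevant norms.

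The main obstacle is the symmetrization step: because $\alpha$ need not be rational, the left-hand side is a priori only an algebraic integer, and one must form a product or sum over the Galois conjugates of $\alpha$ and show that the divisibility (and non-divisibility) by $p$ propagates correctly through that symmetrization to yield a rational integer with $p$-adic valuation zero. Once that bookkeeping is in place, taking $p$ sufficiently large produces a nonzero rational integer of absolute value strictly less than $1$, the desired contradiction.
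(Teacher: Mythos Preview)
The paper does not prove the Hermite--Lindemann theorem at all; it is stated as a classical result (with historical attribution to Hermite and Lindemann) and used as a black box throughout. So there is no ``paper's own proof'' to compare your proposal against.

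As for your sketch on its own merits: the overall strategy is the standard one and is sound in outline, but you have explicitly not completed it. You yourself identify the symmetrization step as ``the main obstacle'' and then write ``once that bookkeeping is in place,'' without actually putting it in place. That is precisely where the real work lies: one must replace the single exponent~$\alpha$ by the full set of its Galois conjugates, multiply (or sum) the resulting expressions over those conjugates, and verify that the product is a genuine rational integer which is nonzero modulo~$p$. The nonvanishing after symmetrization is delicate and does not follow automatically from the nonvanishing of a single factor. As written, what you have is a proof plan, not a proof.

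A smaller point: in the auxiliary polynomial $f_p$, the factor $c^{N}$ and the phrase ``$N$ is chosen to clear denominators'' are vague; in the standard treatment $c$ is tied to a denominator of~$\alpha$ (so that $c\alpha$ is an algebraic integer) and the exponent must be tracked explicitly to ensure each $f_p^{(j)}(k\alpha)$ lands in the ring of integers.
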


As a consequence, the numbers $e^2, e^{\sqrt{2}}$, and $e^{\,i}$ are transcendental, as are $\log 2$ and $\pi$, since $e^{\log 2}=2$ and $e^{\pi i}=-1$ are algebraic.

At the 1900 International Congress of Mathematicians in Paris, as the seventh in his famous list of 23 problems, Hilbert raised the question of the arithmetic nature of the power $\alpha^{\,\beta}$ of two algebraic numbers $\alpha$ and $\beta$. In 1934, Gelfond and Schneider, independently, completely solved the problem (see \cite[p. 9]{baker}).

\begin{theorem}[Gelfond-Schneider] \label{GelSchn}
Assume $\alpha$ and $\beta$ are algebraic numbers, with $\alpha \neq 0$ or $1$, and $\beta$ irrational. Then $\alpha^{\,\beta}$ is transcendental.
\end{theorem}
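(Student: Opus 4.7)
The plan is to adapt Gelfond's classical transcendence argument by contradiction. Suppose $\gamma := \alpha^{\beta}$ is algebraic, and let $K = \mathbb{Q}(\alpha,\beta,\gamma)$, a number field of degree $D$. Fix any branch $\lambda := \log \alpha$ (so $\lambda \ne 0$ since $\alpha \ne 1$), and note that the entire functions $e^{\lambda z}$ and $e^{\beta\lambda z}$ are linearly independent over $\mathbb{C}$ precisely because $\beta \notin \mathbb{Q}$.

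First, for a large integer parameter $L$, I would invoke Siegel's lemma to produce rational integers $p(s,t)$, $0 \le s,t \le L$, not all zero and of absolute value at most $C^{L}$, such that the entire function
\[
F(z) \;=\; \sum_{s,t=0}^{L} p(s,t)\, \alpha^{sz}\gamma^{tz}
\]
vanishes at every integer $m$ in the range $1 \le m \le M$, with $M$ of order $L^{2}/D$. The values $F(m)$ lie in $K$, and the vanishing conditions form a linear system over $\mathbb{Z}$ which Siegel's lemma makes solvable once $(L+1)^{2}$ comfortably exceeds $DM$.

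Next, I would extrapolate the vanishing by a maximum modulus argument. Comparing the supremum of $F$ on a disc $|z| \le R$ with a putatively nonzero value $F(m)$ at the next integer $m$ via a Schwarz lemma / interpolation estimate yields an upper bound of the shape $|F(m)| \le R^{-M} H^{L}$ for a suitable height bound $H$. On the other hand, if $F(m) \ne 0$, then $F(m)$ is a nonzero element of $K$ whose denominator and Galois conjugates can be controlled, and Liouville's inequality supplies a lower bound incompatible with the upper bound once $R$ is chosen appropriately large. Hence $F(m) = 0$; iterating this extrapolation, $F$ vanishes at so many integers that the exponential polynomial, whose frequencies $(s + t\beta)\lambda$ are pairwise distinct, must be identically zero. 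Linear independence of $\{e^{(s + t\beta)\lambda z}\}$ over $\mathbb{C}$ then forces every $p(s,t) = 0$, a contradiction.

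The main obstacle is the simultaneous calibration of the parameters $L$, $M$, and $R$ so that Siegel solvability, the extrapolation inequality, and Liouville's lower bound are all mutually consistent; this calibration is where both the irrationality of $\beta$ and the assumed algebraicity of $\gamma$ enter the argument in an essential way. The tedious but routine auxiliary estimates are the bounds on denominators and houses of the algebraic numbers $\alpha^{m}\gamma^{m}$ in $K$, while the conceptual heart of the proof is the extrapolation step, which propagates vanishing from one initial segment of integers to a longer one via the maximum principle.
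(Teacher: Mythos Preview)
The paper does not supply a proof of the Gelfond--Schneider Theorem. It is quoted as a classical result with a reference to Baker's book \cite[p.~9]{baker}, and is then used as a tool throughout. So there is no ``paper's own proof'' to compare your proposal against.

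Your outline is a recognisable sketch of the Gelfond-type transcendence machine (auxiliary function via Siegel's lemma, extrapolation by the maximum principle, Liouville lower bound), and the overall architecture is sound. One point deserves tightening: the sentence ``$F$ vanishes at so many integers that the exponential polynomial \dots\ must be identically zero'' is not literally justified by linear independence alone, since an exponential polynomial with finitely many distinct frequencies can have infinitely many zeros. What actually closes the argument is a quantitative zero estimate (or, in this particular setup, the Vandermonde-type observation that if the $(L+1)^2$ numbers $\alpha^{s}\gamma^{t}$ are pairwise distinct then vanishing of $F$ at $(L+1)^2$ integers already forces all $p(s,t)=0$); the irrationality of $\beta$ is exactly what guarantees the exponents $s+t\beta$ are distinct. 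Apart from this, and the fact that the parameter calibration you flag as ``the main obstacle'' is indeed the entire substance of the proof, your plan is correct in spirit---just be aware that the paper itself simply cites the theorem rather than proving it.
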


In particular, $2^{\sqrt{2}}, \sqrt{2}^{\sqrt{2}}$, and $e^{\pi}= i^{\,-2i}$ are all transcendental.

Since transcendental numbers are more ``complicated'' than algebraic irrational ones, we might think that the power of two transcendental numbers is also transcendental, like $e^{\pi}$. However, that is not always the case, as the last two examples for Theorem~\ref{Lindemann} show. In fact, there is no known classification of the power of two transcendental numbers analogous to the Gelfond-Schneider Theorem on the power of two algebraic numbers.

In this paper, we first explore a related question (a sort of converse to one raised by the second author in~\cite[Ap\^{e}ndice B]{marques}).

\begin{ques} \label{qXorYinT}
Given positive real numbers $X\neq1$ and $Y\neq1$, with $X^Y$ algebraic, under which conditions will at least one of the numbers $X,Y$ be transcendental?
\end{ques}

Theorem \ref{GelSchn} gives one such condition, namely, $Y$ irrational. In Sections~2 and~3, we give other conditions for Question~\ref{qXorYinT}, in the case $X^Y=Y^X$. To do this, we use the Gelfond-Schneider Theorem to find algebraic and transcendental solutions to each of the exponential equations $y=x^{\,x}$, $y=x^{\,1/x}$, and $x^{\,y}=y^{\,x}$ with $x\neq y$.

In the Appendix, we study the arithmetic nature of values of three classical infinite power tower functions. We do this by using the Gelfond-Schneider and Hermite-Lindemann Theorems to classify solutions to the equations $y=x^{\,y}$ and $y=x^{\,x^{\,y}}$.

A general reference is Knoebel's Chauvenet Prize-winning article \cite{knoebel}. Consult its very extensive annotated bibliography for additional references and history.\\

\noindent
\textbf{Notation.} We denote by $\mathbb{N}$ the natural numbers, $\mathbb{Z}$ the integers, $\mathbb{Q}$ the rationals, $\mathbb{R}$ the reals,
$\mathbb{A}$ the algebraic numbers, and $\mathbb{T}$ the transcendental numbers. For any set $S$ of complex numbers, $S^+ := S \cap (0,\infty)$ denotes the subset of positive real numbers in $S$. The Fundamental Theorem of Arithmetic is abbreviated FTA.


\section{The case $X=Y$: algebraic numbers $T^T\!$ with $T$ transcendental.}
In this section, we give answers to Question~\ref{qXorYinT} in the case $X=Y$. For this we need a result on the arithmetic nature of $Q^{\,Q}$ when $Q$ is rational.

\begin{lemma} \label{Lemma_Q^Q}
If $Q\mem{Q}\setminus\mathbb{Z}$, then $Q^{\,Q}$ is irrational.
\end{lemma}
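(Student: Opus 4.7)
My plan is to prove this by a direct unique-factorization argument in $\mathbb{Z}$; neither Gelfond--Schneider nor Hermite--Lindemann is needed. First I would reduce to the case $Q>0$: for $Q\in\mathbb{Q}\setminus\mathbb{Z}$ with $Q<0$, the value $Q^Q$ (via the principal branch of the logarithm) equals $|Q|^Q e^{i\pi Q}$, and $e^{i\pi Q}$ is non-real whenever $Q\notin\mathbb{Z}$, so $Q^Q$ cannot be a real rational.

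Now assume $Q\in\mathbb{Q}^+\setminus\mathbb{Z}$ and write $Q=a/b$ in lowest terms with $a,b$ positive integers and $b\ge 2$. The key move is to suppose for contradiction that $Q^Q=p/q$ with $\gcd(p,q)=1$, and then raise both sides to the $b$-th power to convert this transcendental-looking equation into the integer identity
\[
a^a q^b = b^a p^b.
\]
Once we are inside $\mathbb{Z}$, the FTA takes over.

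Next I would pick any prime $r$ dividing $b$. Since $\gcd(a,b)=1$, we have $r\nmid a$, so $v_r(a^a)=0$, where $v_r$ denotes the $r$-adic valuation. Taking $v_r$ of both sides of the integer identity yields
\[
b\,v_r(q) = a\,v_r(b) + b\,v_r(p),
\]
i.e.\ $b\mid a\cdot v_r(b)$, and a second application of $\gcd(a,b)=1$ forces $b\mid v_r(b)$.

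The only inequality in the whole proof, and the step I would flag as decisive, is the observation that this last divisibility is impossible: on one hand $v_r(b)\ge 1$ since $r\mid b$, while on the other $v_r(b)\le\log_r b\le\log_2 b<b$ for $b\ge 2$, so $b\nmid v_r(b)$. I do not anticipate a real obstacle here; the only subtlety is choosing to raise to the $b$-th power (rather than, say, the $q$-th) so that the unknowns $p,q$ appear with a common exponent and the FTA bookkeeping decouples them from the data of $Q$.
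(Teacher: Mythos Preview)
Your argument for $Q>0$ is correct and essentially the same as the paper's: both raise the assumed rational $Q^Q$ to the $b$-th power, pick a prime $r\mid b$, use the FTA to force $b\mid v_r(b)$, and finish with a size contradiction (the paper phrases the last step as $p^{\,n}\mid n$ with $p^{\,n}>n$, which is equivalent to your $1\le v_r(b)<b$). For $Q<0$ the paper uses the real $b$-th-root convention when $b$ is odd, writing $Q^{\,Q}=(-1)^a(a/b)^{-a/b}$ and reducing to the positive case, rather than your principal-branch argument; under that convention your non-real claim only covers even $b$, but the odd-$b$ case follows immediately from the positive case you have already established.
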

\begin{proof} 
If $Q>0$, write $Q = a/b$, where $a,b \mem{N}$ and $\gcd(a,b) = 1$. Set $a_1 = a^a$ and
$b_1 = b^a$. Then $\gcd(a_1,b_1) = 1$ and $(a_1/b_1)^{1/b} = Q^Q \mem{Q}^+$. Using the FTA, we deduce that
$b_1^{1/b} \mem{N}$. We must show that $b = 1$. Suppose on the contrary
 that some prime
$p \ |\ b$. Let $p^n$ be the largest power of $p$
 that
divides $b$. Using $b^{a/b} = b_1^{1/b} \mem{N}$ and the FTA again, we
 deduce
that $p^{na/b} \mem{N}$. Hence $b \ |\ na$. Since $\gcd(a,b)=1$, we get
 $b \
|\ n$. But then $p^n \ |\ n$, contradicting $p^n > n$. Therefore, $b = 1$.

If $Q<0$, write $Q=-a/b$, where $a,b\in \mathbb{N}$ and gcd$(a,b)=1$. If $b$ is odd, then by the previous case, $Q^Q=(-1)^a(a/b)^{-a/b}\notin \mathbb{Q}$. If $b$ is even, then $a$ is odd and $(-1)^{1/b}\notin \mathbb{R}$; hence $Q^Q=(-1)^{1/b}(a/b)^{-a/b}\notin \mathbb{Q}$. This completes the proof.
\end{proof}

As an application, using Theorem \ref{GelSchn} we obtain that $Q^{Q^Q}$ \emph{is transcendental if} $Q\in \mathbb{Q}\setminus\mathbb{Z}$.  

Consider now the equation $x^x=y$. When $0<y<e^{-1/e}=0.69220\dots$, there is no solution $x>0$. If $y=e^{-1/e}$, then $x= e^{-1}=0.36787\dots$. For $y\in(e^{-1/e},1)$, there are exactly two solutions $x_0$ and $x_1$, with $0<x_0<e^{-1}<x_1<1$. (See Figure~\ref{yxx3}, which shows the case $y=1/\sqrt{2}$, $x_0=1/4$, $x_1=1/2$.) Finally, given $y\in [1,\infty)$, there is a unique solution $x\in [1,\infty)$.

\begin{figure}[!htb]
\centering
\includegraphics[scale=0.7]{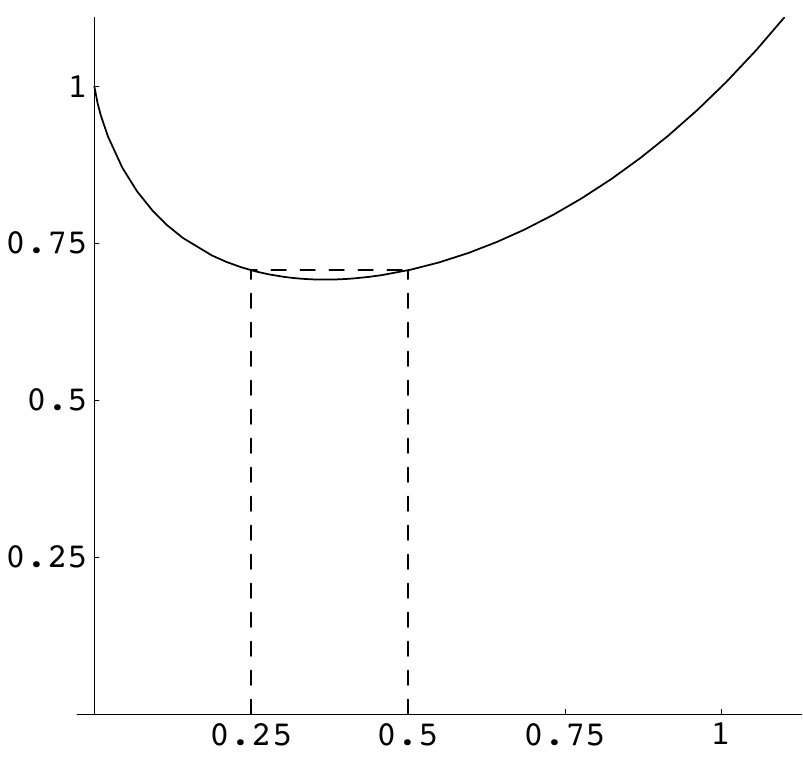}
\caption{$y=x^x$ }
\label{yxx3}
\end{figure}

Turning to the case $X=Y$ of Question~\ref{qXorYinT}, we give two classes of algebraic numbers $A$ such that $T^T=A$ implies $T$ is transcendental.

\begin{proposition} \label{prop1}
Given $A\in [e^{-1/e},\infty)$, let $T\mem{R}^+$ satisfy $T^T=A$. If either

\emph{(i).} $A^n\mem{A}\setminus\mathbb{Q}$ for all $n\mem{N}$, or

\emph{(ii).} $A\mem{Q}\setminus\{n^n:n\mem{N}\}$,\\
then $T$ is transcendental. In particular, $T\mem{T}$ if $T^T\in\mathbb{Q}\cap (e^{-1/e},1)$.
\end{proposition}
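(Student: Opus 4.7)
The plan is to argue by contradiction: assume $T$ is algebraic, and derive a contradiction in each of the two cases.

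First I would use the Gelfond–Schneider Theorem to reduce to the rational case. If $T$ were an algebraic irrational, then since $T^T = A$ is algebraic, we would have $T \neq 0, 1$ and both base and exponent algebraic with exponent irrational, so Theorem~\ref{GelSchn} would force $T^T$ to be transcendental, contradicting the algebraicity of $A$. Hence $T \in \mathbb{Q}^+$, and we can write $T = a/b$ with $a, b \in \mathbb{N}$ and $\gcd(a,b) = 1$. (Note $T \neq 1$, since $T = 1$ gives $A = 1$, which is excluded in both (i) and (ii).)

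For case (ii), I would apply Lemma~\ref{Lemma_Q^Q} directly: if $T = a/b$ were not an integer, then $T^T = A$ would be irrational, contradicting $A \in \mathbb{Q}$. So $T = n$ for some $n \in \mathbb{N}$, whence $A = n^n$, contradicting the assumption $A \notin \{n^n : n \in \mathbb{N}\}$.

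For case (i), the idea is to clear the denominator in the exponent. Raising $T^T = A$ to the $b$-th power gives
\[
A^b = T^{Tb} = T^a = \left(\frac{a}{b}\right)^a = \frac{a^a}{b^a} \in \mathbb{Q},
\]
which contradicts the hypothesis that $A^b$ is irrational. Finally, for the concluding ``in particular'' clause, if $T^T \in \mathbb{Q} \cap (e^{-1/e}, 1)$, then $A = T^T < 1$, so $A$ cannot equal $n^n$ for any $n \in \mathbb{N}$, and case (ii) delivers $T \in \mathbb{T}$.

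There is no real obstacle here; the only delicate point is to verify that the hypotheses of Gelfond–Schneider apply (in particular $T \notin \{0,1\}$), which follows from (i) or (ii). Once $T$ is known to be rational, both cases reduce to the previous lemma or a direct exponent-clearing computation.
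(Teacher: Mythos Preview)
Your proof is correct and follows essentially the same approach as the paper: use Gelfond--Schneider to force $T\in\mathbb{Q}$, then in case~(i) clear the denominator to get $A^b\in\mathbb{Q}$, and in case~(ii) invoke Lemma~\ref{Lemma_Q^Q} to force $T\in\mathbb{Z}$ and hence $A=n^n$. The only cosmetic difference is that in case~(ii) the paper applies Lemma~\ref{Lemma_Q^Q} first (to get $T$ irrational) and Gelfond--Schneider second, while you do it in the reverse order; the content is identical.
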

\begin{proof}
(i). Suppose $T\mem{A}$. Since $T>0$ and $T^T=A\mem{A}$, Theorem~\ref{GelSchn} implies $T\mem{Q}$, say $T=m/n$ with $m,n\mem{N}$. But then $A^n=T^{\,m}\mem{Q}$, contradicting (i). Therefore, $T\mem{T}$.\\
(ii). Since $T^T=A\mem{Q}\setminus\{n^n:n\mem{N}\}$, Lemma~\ref{Lemma_Q^Q} implies $T$ is irrational. Then Theorem~\ref{GelSchn} yields $T\mem{T}$, and the proposition follows.
\end{proof}

To illustrate case~(i), take $A=\sqrt{3}-1\in(e^{-1/e},1)$. Using a computer algebra system, such as \emph{Mathematica} with its FindRoot command, we solve the equation $x^x=A$ with starting values of $x$ near $0$ and $1$, obtaining the solutions $T_0:=0.18461\dots$ and $T_1:=0.58872\dots$. Similarly, for case~(ii), setting $A=2$ leads to the solution $T_2:=1.55961\dots$. Then
\begin{equation*}
    T_0^{T_0} = T_1^{T_1} = \sqrt{3} - 1, \quad T_0 < e^{-1} < T_1; \qquad T_2^{T_2} = 2; \qquad T_0,T_1,T_2 \mem{T}.
\end{equation*}

\begin{prob} \label{NASC1}
In Proposition~\ref{prop1}, replace the two sufficient conditions (i), (ii) with a necessary and sufficient condition that includes them.
\end{prob}

We will return to the case $X=Y$ of Question~\ref{qXorYinT} at the end of the next section (see Corollary~\ref{F0QQ_trans}).

\section{The case $X^Y=Y^X$, with $X \neq Y$.}
In this section, we give answers to Question~\ref{qXorYinT} by finding algebraic and transcendental solutions of the equation $x^y=y^x$, for positive real numbers $x \neq y$. (Compare Figure~\ref{xyyx2}. Moulton~\cite{moulton} gives a graph for both positive and negative values of $x$ and $y$, and discusses solutions in the complex numbers.)

\begin{figure}[!htb]
\centering
\includegraphics[scale=0.7]{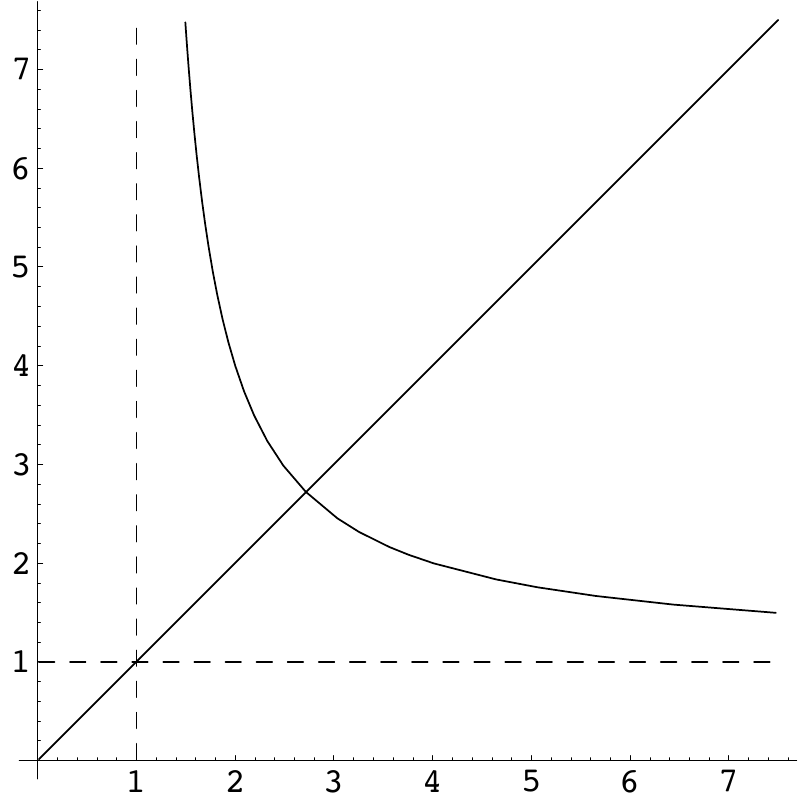}
\caption{$x^y=y^x$ }
\label{xyyx2}
\end{figure}

Consider now Question~\ref{qXorYinT} in the case $X^Y=Y^X=A\mem{A}$, with $X\neq Y$. We give a condition on $A$ which guarantees that at least one of $X,Y$ is transcendental.

\begin{proposition} \label{(i)RT=TR=A}
Assume that
\begin{align} \label{(i)rta}
   T,R&\mem{R}^+, & 
   A&:=T^R=R^T, &
   T &\neq R.
\end{align}
If $A^n\mem{A}\setminus\mathbb{Q}$ for all $n\mem{N}$, then at least one of the numbers $T,R$, say $T$, is transcendental.
\end{proposition}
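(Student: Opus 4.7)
The plan is to argue by contradiction: suppose both $T$ and $R$ are algebraic, and derive a contradiction with the hypothesis that $A^n$ is \emph{irrational} algebraic for every $n\in\mathbb{N}$.

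First I would dispose of the degenerate possibility $T=1$ or $R=1$. If $T=1$, then $A=T^R=1$, and the equation $R^T=A$ forces $R=1$ as well, contradicting $T\neq R$. So neither $T$ nor $R$ equals $1$, and both are positive, so in particular both are nonzero and different from $1$. This is exactly what is needed to apply the Gelfond--Schneider Theorem (Theorem~\ref{GelSchn}).

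Next, I would apply Gelfond--Schneider twice. Since $T\in\mathbb{A}$ with $T\neq 0,1$, and $T^R=A\in\mathbb{A}$, the exponent $R$ cannot be an irrational algebraic number (else $T^R$ would be transcendental). Hence $R\in\mathbb{Q}$. By the symmetry of the hypothesis $T^R=R^T$, the same argument with the roles of $T$ and $R$ exchanged gives $T\in\mathbb{Q}$ as well. So both $T$ and $R$ are positive rationals.

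The final step is the arithmetic kill. Writing $R=c/d$ in lowest terms with $c,d\in\mathbb{N}$, observe that
\[
A^{d}=\bigl(T^{R}\bigr)^{d}=T^{Rd}=T^{c}\in\mathbb{Q},
\]
since $T\in\mathbb{Q}^+$ and $c\in\mathbb{N}$. But by hypothesis $A^d\in\mathbb{A}\setminus\mathbb{Q}$, a contradiction. Therefore at least one of $T,R$ is transcendental, and by symmetry we may label it $T$.

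There is no real obstacle here: the argument is a clean two-fold application of Gelfond--Schneider followed by an elementary rationality computation. The only subtle point is noticing that the hypothesis ``$A^n\in\mathbb{A}\setminus\mathbb{Q}$ for \emph{all} $n$'' is used only through the single value $n=d$, where $d$ is the denominator of $R$ in lowest terms; the rest of the hypothesis (for other $n$) plays no role in this proof and is really there to handle the algebraic-but-irrational range of $A$ uniformly.
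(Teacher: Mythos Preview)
Your proof is correct and follows essentially the same route as the paper's: assume both $T,R$ algebraic, use $T,R\neq0,1$ together with Gelfond--Schneider and $A\in\mathbb{A}$ to force $T,R\in\mathbb{Q}$, then raise $A$ to the denominator of $R$ to get a rational power of $T$, contradicting $A^n\in\mathbb{A}\setminus\mathbb{Q}$. The only difference is that you spell out the exclusion of $T=1$ or $R=1$ explicitly, whereas the paper just notes that \eqref{(i)rta} rules this out.
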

\begin{proof}
Suppose on the contrary that $T,R \mem{A}$. Since $T^R = R^T =A \mem{A}$ and \eqref{(i)rta} implies $T,R\neq 0$ or $1$, Theorem \ref{GelSchn} yields $T,R \mem{Q}$, say $T=a/b$ and $R=m/n$, where $a, b, m, n \mem{N}$. But then $A^n=(a/b)^m\mem{Q}$, contradicting the hypothesis. Therefore, $\{T,R\}\cap\mathbb{T}\neq\emptyset$.
\end{proof}

In order to give an example of Proposition~\ref{(i)RT=TR=A}, we need the following classical result, which is related to a problem posed in 1728 by D.~Bernoulli~\cite[p.~262]{bernoulli}. (In \cite{knoebel}, see Sections~1 and~3 and the notes to the bibliography.)

\begin{lemma} \label{xy=yx=z}
Given $z \mem{R}^+$, there exist $x$ and $y$ such that
\begin{equation*}
      x^y = y^x = z, \qquad 0<x<y,
\end{equation*}
if and only if $z > e^e = 15.15426 \ldots$. In that case, $1 < x < e < y$ and $x,y$ are given parametrically by
\begin{equation}
      x = x(t):= \left(1+\frac{1}{t}\right)^t, \qquad y = y(t):= \left(1+\frac{1}{t}\right)^{t+1}
\label{XandY}
\end{equation}
for $t>0$. Moreover, $x(t)^{y(t)}$ is decreasing, and any one of the numbers $x\in(1,e)$, $y\in(e,\infty)$, $z\in(e^e, \infty)$, and $t\in(0,\infty)$ determines the other three uniquely.
\end{lemma}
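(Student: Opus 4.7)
My plan is to reduce $x^y = y^x$ (for positive $x \neq y$) to $f(x) = f(y)$ where $f(u) := \log(u)/u$, parametrize all solutions, and verify monotonicity and limits along the parametrization. A standard analysis of $f$ (using $f'(u) = (1-\log u)/u^2$) shows $f$ is strictly increasing on $(0,e]$ to its maximum $1/e$ at $u = e$ and strictly decreasing on $[e, \infty)$ to $0$, while $f(1) = 0$ and $f < 0$ on $(0,1)$. This forces $1 < x < e < y$. Then setting $r := y/x > 1$ and substituting $y = rx$ into the equation gives $x^{r-1} = r$, hence $x = r^{1/(r-1)}$; the reparametrization $t := 1/(r-1) > 0$ produces exactly the formulas \eqref{XandY}, so every solution with $x < y$ arises for a unique $t > 0$.

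Next I would establish monotonicity and limits of $x(t)$ and $y(t)$. Differentiating $\log x(t) = t\log(1+1/t)$ and $\log y(t) = (t+1)\log(1+1/t)$ yields derivatives $\log(1+1/t) - 1/(t+1)$ and $\log(1+1/t) - 1/t$, which have definite signs by the classical estimates $s/(1+s) < \log(1+s) < s$. So $x(t)$ strictly increases from $1$ to $e$ and $y(t)$ strictly decreases from $\infty$ to $e$ as $t$ traverses $(0,\infty)$.

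The main technical obstacle is the strict decrease of $z(t) := x(t)^{y(t)}$ from $\infty$ to $e^e$. The two limits follow from elementary expansions of $\log z = y\log x = (1+1/t)^{t+1}\cdot t\log(1+1/t)$ at $t \to 0^+$ and $t \to \infty$. For monotonicity, I would pass to $u := 1 + 1/t$ (decreasing in $t$), rewrite $\log z = \frac{u}{u-1}\cdot u^{1/(u-1)}\log u$, and compute $\frac{d}{du}\log\log z$ directly from the expansion $\log u - \log(u-1) + \frac{\log u}{u-1} + \log\log u$; after the contributions from the first three terms collapse, one is left with
\[
\frac{d}{du}\log\log z \;=\; \frac{1}{u\log u} - \frac{\log u}{(u-1)^2},
\]
so the desired sign reduces to the inequality $(u-1)^2 > u(\log u)^2$ for $u > 1$. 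Substituting $v := \sqrt{u}$ converts this into $v - v^{-1} > 2\log v$, immediate from vanishing at $v = 1$ together with $(v - v^{-1} - 2\log v)' = (v-1)^2/v^2 \geq 0$.

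Combining the three strict monotonicities with their endpoint limits produces continuous bijections $(0,\infty) \leftrightarrow (1,e) \leftrightarrow (e,\infty) \leftrightarrow (e^e, \infty)$ via $t \mapsto x(t) \mapsto y(t) \mapsto z(t)$. This simultaneously gives existence of a solution precisely when $z > e^e$, uniqueness of $(x,y)$ with $0 < x < y$, and the stated mutual determination of $t, x, y, z$.
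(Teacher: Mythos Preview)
Your argument is correct and follows the same parametrization as the paper: both set $r=y/x>1$ (the paper calls it $s$), solve $x=r^{1/(r-1)}$, and substitute $t=1/(r-1)$ to obtain \eqref{XandY}; the monotonicity of $x(t)$ and $y(t)$ and their limits are handled the same way (the paper simply says ``by calculus'' and ``L'Hopital'').

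The genuine difference is in the decrease of $z(t)=x(t)^{y(t)}$. The paper does not prove this directly: it cites Anderson~\cite[Lemma~4.3]{anderson} for the monotonicity of $y_1(s)^{-x_1(s)}$ in the $s$-variable and transfers it back. You instead give a self-contained proof: passing to $u=1+1/t$, computing
\[
\frac{d}{du}\log\log z \;=\; \frac{1}{u\log u}-\frac{\log u}{(u-1)^2},
\]
and reducing the sign to $(u-1)^2>u(\log u)^2$, which after $v=\sqrt{u}$ becomes $v-v^{-1}>2\log v$ with derivative $(v-1)^2/v^2$. This buys independence from the external reference at the cost of a short explicit calculation; the paper's route is shorter on the page but not self-contained. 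One minor stylistic difference: you establish $1<x<e<y$ up front via $f(u)=\log u/u$, whereas the paper reads it off after the parametrization.
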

\begin{proof}
Given $x,y \mem{R}^+$ with $x<y$, denote the slope of the line from the origin to the point $(x,y)$ by $s:=y/x$. Then $s>1$, and $y=sx$ gives the equivalences
\begin{align*}
      x^y = y^x &\Longleftrightarrow x^{sx} = (sx)^x \Longleftrightarrow x^s = sx\\
      & \Longleftrightarrow x = x_1(s) := s^{1/(s-1)} \Longleftrightarrow y = y_1(s) := s^{s/(s-1)}.
\end{align*}
The substitution $s=1+t^{-1}$ then produces \eqref{XandY}, implying $1 < x < e < y$. Using L'Hopital's rule, we get
\begin{equation*}
   \lim_{t\to 0^+} x(t)=1, \quad \lim_{t\to 0^+} y(t)=\infty \quad \Longrightarrow \quad \lim_{t\to 0^+} y(t)^{x(t)}=\infty.
\end{equation*}
By calculus, $x(t)$ is increasing, $y(t)$ is decreasing, and $y(t)^{x(t)}\to e^e$ as $t\to\infty$ (see Figure~\ref{x(t)y(t)}). Anderson~\cite[Lemma~4.3]{anderson} proves that the function $y_1(s)^{-x_1(s)}$ is decreasing on the interval $1<s<\infty$, and we infer that $y(t)^{x(t)}$ is decreasing on $0<t<\infty$ (see Figure~\ref{x(t)2}). The lemma follows.
\end{proof}

\begin{figure}[!htb]
\centering
\includegraphics[scale=0.7]{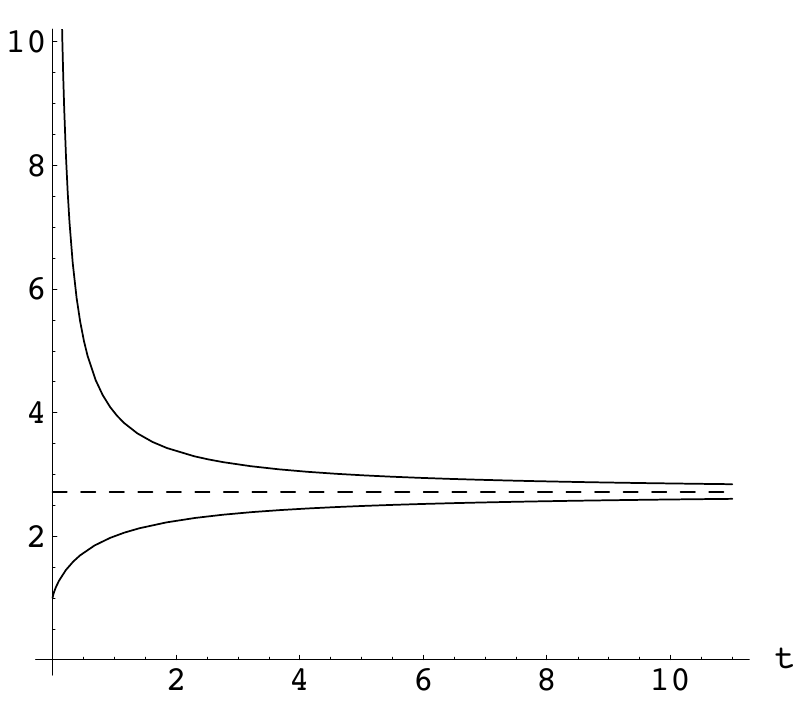}
\caption{The graphs of $x(t)$ (bottom) and $y(t)$}
\label{x(t)y(t)}
\end{figure}

\begin{figure}[!htb]
\centering
\includegraphics[scale=0.7]{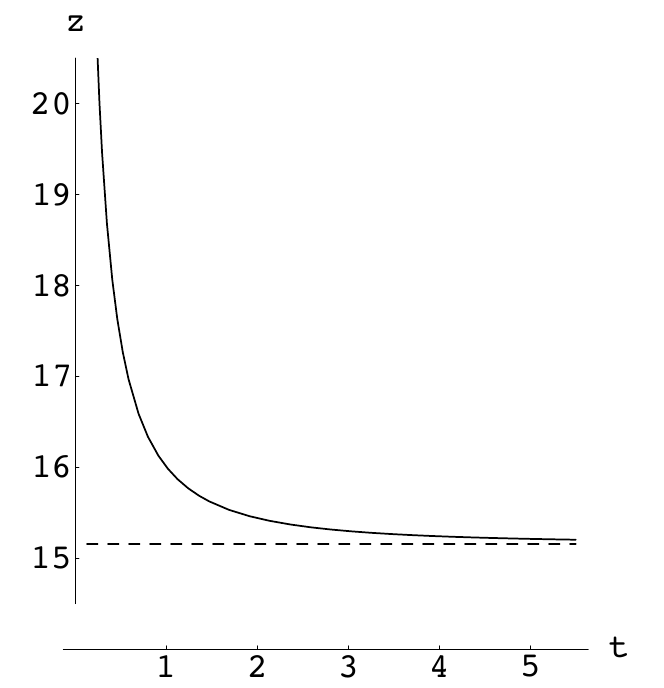}
\caption{$z=x(t)^{y(t)}=y(t)^{x(t)}$}
\label{x(t)2}
\end{figure}

For instance, taking $t=1$ in \eqref{XandY} leads to $2^4=4^2=16$. To parameterize the part of the curve $x^y=y^x$ with $x>y>0$, replace $t$ with $-t-1$ in \eqref{XandY} (or replace $s$ with $1/s$ in the parameterization $x=x_1(s),\ y=y_1(s)$, which is due to Goldbach~\cite[pp.~280-281]{goldbach}). For example, setting $t=-2$ in \eqref{XandY} yields $(x,y)=(4,2)$.

Euler~\cite[pp. 293-295]{euler1} described a different way to find solutions of $x^y = y^x$ with $0<x<y$. Namely, the equivalence
\begin{equation*}
    x^y = y^x \quad \Longleftrightarrow \quad x^{1/x} = y^{1/y}
\end{equation*}
shows that a solution is determined by equal values of the function $g(u) = u^{1/u}$ at $u=x$ and $u=y$. (Figure~\ref{yy14} exhibits the case $x=2$, $y=4$.) From the properties of $g(u)$, including its maximum at $u=e$ and the bound $g(u)>1$ for $u\in(1,\infty)$, we see again that $1<x<e<y$.

\begin{figure}[!htb]
\centering
\includegraphics[scale=0.7]{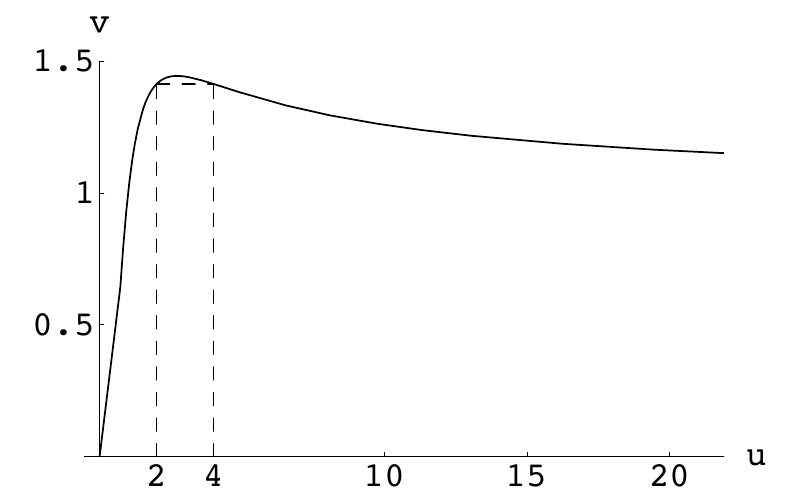}
\caption{$v=g(u)=u^{1/u}$ }
\label{yy14}
\end{figure}

We can now give an example for Proposition~\ref{(i)RT=TR=A}.

\begin{example} \label{exRT=TR=A}
Set $A=14+\sqrt{2}$. Since $A>e^e$, the equation $x(t)^{y(t)}=A$ has a (unique) solution $t=t_1>0$. (Computing $t_1$, we find that $x(t_1)=2.26748\dots$ and $y(t_1)=3.34112\dots$.) Then $(T,R) := (x(t_1),y(t_1))$ or $(y(t_1),x(t_1))$ satisfies
\begin{align*}
    T^R &= R^T = 14+\sqrt{2}, &
    T &\neq R, &
    T &\mem{T}.
\end{align*}
\end{example}

In the next proposition, we characterize the algebraic and rational solutions of $x^y=y^x$ with $0<x<y$. (Part~(i) is due to Mahler and Breusch \cite{mahler}. For other references, as well as all rational solutions to the more general equation $x^y=y^{mx}$, where $m\mem{N}$, see Bennett and Reznick~\cite{bennett}.)

\begin{proposition} \label{onlyAlg}
Assume $0 < A_1 < A_2$. Define $x(t)$ and $y(t)$ as in \eqref{XandY}.

\emph{(i).} Then $A_1^{A_2} = A_2^{A_1}$ and $A_1, A_2 \mem{A}$ if and only if $A_1=x(t)$ and $A_2=y(t)$, with $t\mem{Q}^+$.

\emph{(ii).} In that case, if $t\mem{N}$, then $A_1^{A_2} = A_2^{A_1}\mem{A}$ and $A_1,A_2\mem{Q}$, while if $t\not\mem{N}$, then $A_1^{A_2} = A_2^{A_1}\mem{T}$ and $A_1,A_2\not\mem{Q}$.
\end{proposition}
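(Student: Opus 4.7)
The plan is to handle the two parts separately. The easy direction $(\Leftarrow)$ of~(i) is immediate from Lemma~\ref{xy=yx=z}: writing $t \in \mathbb{Q}^+$ as $t = p/q$ in lowest terms, $1 + 1/t = (p+q)/p$ is rational, so $x(t) = ((p+q)/p)^{p/q}$ and $y(t) = ((p+q)/p)^{(p+q)/q}$ are $q$th roots of positive rationals, hence algebraic; Lemma~\ref{xy=yx=z} also supplies $x(t)^{y(t)} = y(t)^{x(t)}$ together with $x(t) < y(t)$.

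For the converse direction of~(i), I would suppose $0 < A_1 < A_2$ are algebraic with $A_1^{A_2} = A_2^{A_1}$. Lemma~\ref{xy=yx=z} provides a unique $t > 0$ with $A_1 = x(t)$, $A_2 = y(t)$, and the identity $y(t)/x(t) = 1 + 1/t$ yields $1 + 1/t = A_2/A_1 \in \mathbb{A}$, whence $t \in \mathbb{A}$. To force $t$ to be rational, I would suppose the contrary and apply Theorem~\ref{GelSchn} to $A_1 = (1 + 1/t)^t$: the base $1 + 1/t$ is algebraic and distinct from $0$ and $1$, so with an irrational algebraic exponent $A_1$ would be transcendental, contradicting $A_1 \in \mathbb{A}$.

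For part~(ii), write $t = p/q$ with $\gcd(p, q) = 1$. When $q = 1$, both $x(t)$ and $y(t)$ are rational, and $A_1^{A_2}$ is a rational power of a rational, hence algebraic. For $q \ge 2$ the crux is to show that $x(t)$ (and similarly $y(t)$) is irrational. Suppose $x(t) = a/b$ in lowest terms; raising to the $q$th power yields
\begin{equation*}
p^p a^q = (p+q)^p b^q.
\end{equation*}
Since $\gcd(p, p+q) = \gcd(p, q) = 1$, a prime-by-prime $P$-adic valuation analysis forces both $p$ and $p+q$ to be perfect $q$th powers, say $p = M^q$ and $p+q = N^q$. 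Then $N^q - M^q = q$, which is impossible for $q \ge 2$: if $M \ge 1$ then $N^q - M^q \ge (M+1)^q - M^q > q$, while $M = 0$ forces $N^q = q$, which has no solution for $q \ge 2$. The same argument applied to $y(t) = ((p+q)/p)^{(p+q)/q}$ shows $y(t)$ is irrational. Both $A_1, A_2$ remain algebraic (as $q$th roots of rationals), so Theorem~\ref{GelSchn} with $\alpha = A_1 \in \mathbb{A} \setminus \{0, 1\}$ and $\beta = A_2$ irrational algebraic delivers $A_1^{A_2} \in \mathbb{T}$.

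The main obstacle is this irrationality statement for $x(t)$ and $y(t)$ in the case $q \ge 2$. It is reminiscent of Lemma~\ref{Lemma_Q^Q}, but cannot be quoted directly because the base $(p+q)/p$ and the exponent $p/q$ are distinct rationals. The prime-factorization reduction combined with the elementary bound $N^q - M^q > q$ for $M \ge 1$, $q \ge 2$, is the technical heart of the proof; once it is in hand, Gelfond-Schneider takes care of the transcendence in both halves of the argument.
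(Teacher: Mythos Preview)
Your proof is correct and follows essentially the same route as the paper's. The only cosmetic differences are that in~(i) the paper applies Gelfond--Schneider to the relation $x(t)^{(t+1)/t}=y(t)$ rather than to $(1+1/t)^t=x(t)$, and in~(ii) the paper argues contrapositively (assume $A_1^{A_2}\in\mathbb{A}$, deduce $A_1,A_2\in\mathbb{Q}$ via Gelfond--Schneider, then force $b=1$), but the Diophantine core---that $p$ and $p+q$ both being $q$th powers forces $q=1$ because $N^q-M^q>q$ for $N>M\ge1$, $q\ge2$---is identical to yours.
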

\begin{proof}
(i). By Lemma~\ref{xy=yx=z}, it suffices to prove that $t\mem{Q}$ if $x(t),y(t)\mem{A}$. Formulas \eqref{XandY} show that $x(t)^{(t+1)/t}=y(t)$. As $x(t)\neq 0$ or $1$, Theorem \ref{GelSchn} implies $t\not\mem{A}\setminus\mathbb{Q}$. From \eqref{XandY} we also see that $y(t)/x(t)=1+t^{-1}$, and hence $t\mem{A}$. Therefore, $t\mem{Q}$.\\
(ii). It suffices to show that if \mbox{$A_1^{A_2} = A_2^{A_1}\mem{A}$}, where $A_1=x(a/b)$ and $A_2=y(a/b)$, with $a,b\mem{N}$ and $\gcd (a,b)=1$, then $b=1$. Theorem \ref{GelSchn} implies $A_1, A_2 \mem{Q}$. It follows, using \eqref{XandY} and the FTA, that $a+b$ and $a$ are $b$th powers, say $a+b=m^b$ and $a=n^b$, where $m,n \mem{N}$. Then $d:=m-n\geq 1$ and
\mbox{$b=(n+d)^b-n^b$} \mbox{$=bn^{b-1}d+\cdots+d^b$}. Hence $b=1$.
\end{proof}

For example, taking $t=2$ and $1/2$ yields
\begin{align*}
    (9/4)^{27/8}&=(27/8)^{9/4}\mem{A}, &
    \sqrt{3}^{\sqrt{27}}&=\sqrt{27}^{\sqrt{3}}\mem{T}.
\end{align*}

Here is another sufficient condition for Question~\ref{qXorYinT} in the case $X^Y=Y^X$ with $X\neq Y$.

\begin{corollary} \label{(ii)RT=TR=A}
Let $T,R\mem{R}^+$ satisfy $T^R=R^T=N\mem{N}$ and $T\neq R$. If $N\neq 16$, then at least one of the numbers $T,R$, say $T$, is transcendental.
\end{corollary}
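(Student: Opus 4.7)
The plan is to argue by contradiction: assume both $T$ and $R$ are algebraic, and use Proposition~\ref{onlyAlg} to pin down their exact form, then use the Fundamental Theorem of Arithmetic to force $N=16$.

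More precisely, suppose $T,R\in\mathbb{A}$. Relabeling if necessary, assume $T<R$. Then Proposition~\ref{onlyAlg}(i) applies: there exists $t\in\mathbb{Q}^+$ with $T=x(t)=(1+1/t)^t$ and $R=y(t)=(1+1/t)^{t+1}$. Now use the hypothesis that $T^R=N\in\mathbb{N}\subset\mathbb{A}$. Proposition~\ref{onlyAlg}(ii) then forces $t\in\mathbb{N}$, say $t=n$, so that $T=((n+1)/n)^n$ and $R=((n+1)/n)^{n+1}$ are rational, with $\gcd(n,n+1)=1$.

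The remaining step is to show that, with these explicit formulas, $N\in\mathbb{N}$ is only possible when $n=1$. Compute
\[
N = T^R = \left(\frac{n+1}{n}\right)^{nR} = \left(\frac{n+1}{n}\right)^{(n+1)^{n+1}/n^n},
\]
and raise both sides to the integer power $n^n$ to clear the exponent:
\[
N^{n^n} = \left(\frac{n+1}{n}\right)^{(n+1)^{n+1}} = \frac{(n+1)^{(n+1)^{n+1}}}{n^{(n+1)^{n+1}}}.
\]
Since $\gcd(n,n+1)=1$, the right-hand side is already in lowest terms, so it is a positive integer only if $n^{(n+1)^{n+1}}=1$, i.e., $n=1$. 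But then $T=2$, $R=4$, and $N=2^4=16$, contradicting the hypothesis $N\ne16$. Hence one of $T,R$ must be transcendental.

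The only genuine content beyond citing the previous proposition is the FTA argument reducing $n\ge 2$ to absurdity; this should be straightforward, and I don't expect an obstacle, since after clearing the denominator $n^n$ in the exponent the resulting rational has coprime numerator and denominator, making integrality transparent.
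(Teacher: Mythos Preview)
Your argument is correct. The overall structure---contradiction via Proposition~\ref{onlyAlg} to force $t=n\in\mathbb{N}$---matches the paper, but the final step differs. The paper does not compute $N^{n^n}$; instead it invokes Lemma~\ref{xy=yx=z}: since $z(t)=x(t)^{y(t)}$ is strictly decreasing on $(0,\infty)$ with $z(1)=16$ and $\lim_{t\to\infty}z(t)=e^e\in(15,16)$, for every integer $n\ge 2$ one has $z(n)\in(e^e,16)$, an interval containing no integers, so $N\in\mathbb{N}$ forces $n=1$ and $N=16$. Your FTA argument is a nice alternative: it is entirely arithmetic and self-contained, sidestepping the analytic monotonicity statement (which the paper imports from Anderson). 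The paper's route is shorter given that Lemma~\ref{xy=yx=z} is already in hand, but yours would work even without that lemma.
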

\begin{proof}
If on the contrary \mbox{$T,R\mem{A}$}, then Proposition~\ref{onlyAlg} implies\linebreak[4] $(T,R)=(x(n),y(n))$ or $(y(n),x(n))$, for some $n\mem{N}$. Thus $x(n)^{y(n)}= N\neq16$. But a glance at Figure~\ref{x(t)2} (or at Lemma~\ref{xy=yx=z}) shows that is impossible.
\end{proof}

For instance, the equation $x(t)^{y(t)}=17$ has a (unique) solution $t=t_1>0$ (computing $t_1$, we get $(x(t_1),y(t_1))=(1.78381\dots,4.89536\dots)$), and for $(T,R) = (x(t_1),y(t_1))$ or $(y(t_1),x(t_1))$ we have
\begin{align*}
    T^R &= R^T = 17, &
    T &\neq R, &
    T &\mem{T}.
\end{align*}

We make the following prediction.

\begin{conj} \label{conj}
In Proposition~\ref{(i)RT=TR=A} and Corollary~\ref{(ii)RT=TR=A} a stronger conclusion holds, namely, that both $T$ and $R$ are transcendental.
\end{conj}

We can give a conditional proof of Conjecture \ref{conj}, assuming a conjecture of Schanuel~\cite[p.~120]{baker}. Namely, in view of Proposition~\ref{(i)RT=TR=A} and Corollary~\ref{(ii)RT=TR=A}, Conjecture \ref{conj} is an immediate consequence of the following conditional result \cite[Theorem 3]{ms}.

\begin{theorem}
Assume Schanuel's conjecture and let $z$ and $w$ be complex numbers, not $0$ or $1$. If $z^{\,w}$ and $w^{\,z}$ are algebraic, then $z$ and $w$ are either both rational or both transcendental.
\end{theorem}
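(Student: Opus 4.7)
The plan is a case split on the nature of $z$ and $w$, with Schanuel's conjecture invoked only in the one case that truly requires it. If both $z, w \in \mathbb{A}$, then the hypotheses $z, w \neq 0, 1$ together with $z^w, w^z \in \mathbb{A}$ allow two applications of Theorem~\ref{GelSchn} to force both into $\mathbb{Q}$. If both are transcendental, there is nothing to prove. By symmetry, it remains to rule out the case $z \in \mathbb{A}$, $w \in \mathbb{T}$. If additionally $z = p/q \in \mathbb{Q}$ in lowest terms with $p \neq 0$, then $(w^z)^q = w^p$ is algebraic, so $w$ is a root of $X^p - w^p \in \mathbb{A}[X]$ and hence algebraic, contradicting $w \in \mathbb{T}$. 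Thus I may further assume $z \in \mathbb{A} \setminus \mathbb{Q}$, with minimal polynomial $m(X) \in \mathbb{Q}[X]$ of some degree $n \geq 2$.

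For this remaining case, set $L_1 := \log z$, $L_2 := \log w$, $L_3 := w \log z$, $L_4 := z \log w$ (fix any branches; all are nonzero since $z, w \neq 0, 1$). Their exponentials are $z, w, z^w, w^z$, all algebraic except $w$, and $w = L_3/L_1 \in \mathbb{Q}(L_1, L_3)$. The key algebraic relation comes from $z = L_4/L_2 \in \mathbb{A}$: the homogenization $L_2^{\,n}\, m(L_4/L_2) = 0$ is a nontrivial polynomial identity in $\mathbb{Q}[L_2, L_4]$, so $\mathrm{trdeg}_\mathbb{Q}\, \mathbb{Q}(L_2, L_4) \leq 1$. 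Subadditivity of transcendence degree then gives
\[
\mathrm{trdeg}_\mathbb{Q}\, \mathbb{Q}(L_1, L_2, L_3, L_4) \leq \mathrm{trdeg}_\mathbb{Q}\, \mathbb{Q}(L_1, L_3) + \mathrm{trdeg}_\mathbb{Q}\, \mathbb{Q}(L_2, L_4) \leq 2 + 1 = 3.
\]
On the other hand, once $L_1, L_2, L_3, L_4$ are shown to be $\mathbb{Q}$-linearly independent, Schanuel applied to them yields $\mathrm{trdeg}_\mathbb{Q}\, \mathbb{Q}(L_1,\ldots,L_4, z, w, z^w, w^z) \geq 4$; since $z, z^w, w^z \in \mathbb{A}$ and $w \in \mathbb{Q}(L_1, L_3)$, this collapses to $\mathrm{trdeg}_\mathbb{Q}\, \mathbb{Q}(L_1, L_2, L_3, L_4) \geq 4$, contradicting the previous display.

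The main obstacle is verifying the $\mathbb{Q}$-linear independence of $L_1, L_2, L_3, L_4$. I plan to exponentiate any putative relation $a L_1 + b L_2 + c L_3 + d L_4 = 0$ with $a, b, c, d \in \mathbb{Q}$ into the identity $z^a (z^w)^c w^b (w^z)^d = 1$; since $z, z^w, w^z \in \mathbb{A}$ and $w \in \mathbb{T}$, this immediately forces $b = 0$. If $d = 0$ as well, the residual $(a + cw) L_1 = 0$ gives $a = c = 0$ using $w \in \mathbb{T}$. If instead $d \neq 0$, then $L_4$ lies in the $\mathbb{Q}$-span of $L_1, L_3$, so $L_2 = L_4/z$ is algebraic over $\mathbb{Q}(L_1, L_3)$; but a preliminary Schanuel application to the triple $\{L_1, L_2, L_3\}$---whose $\mathbb{Q}$-linear independence follows from the same exponentiation trick using only $z^w \in \mathbb{A}$---yields $\mathrm{trdeg}_\mathbb{Q}\, \mathbb{Q}(L_1, L_2, L_3) \geq 3$, incompatible with $L_2$ being algebraic over $\mathbb{Q}(L_1, L_3)$. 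Hence $d = 0$, completing the verification and therefore the proof.
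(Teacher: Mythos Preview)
The paper does not actually prove this theorem: it quotes the statement verbatim from \cite[Theorem~3]{ms} and uses it as a black box to deduce Conjecture~\ref{conj}. So there is no in-paper proof to compare your attempt against.

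That said, your argument stands on its own and is correct. The reduction via Gelfond--Schneider to the mixed case $z\in\mathbb{A}\setminus\mathbb{Q}$, $w\in\mathbb{T}$ is clean; the transcendence-degree bound $\mathrm{trdeg}_{\mathbb{Q}}\mathbb{Q}(L_1,L_2,L_3,L_4)\le 3$ coming from the homogeneous relation $L_2^{\,n}m(L_4/L_2)=0$ is valid (algebraic dependence over $\mathbb{Q}$ persists over $\mathbb{Q}(L_1,L_3)$, so the tower formula gives the bound); and the two-stage verification of $\mathbb{Q}$-linear independence of $L_1,L_2,L_3,L_4$---first forcing $b=0$ by exponentiating, then ruling out $d\neq 0$ via a preliminary Schanuel application to $\{L_1,L_2,L_3\}$---is sound. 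One cosmetic point: when you exponentiate a rational relation you should clear denominators first so that the powers $z^a,(z^w)^c,(w^z)^d$ are unambiguous integer powers of algebraic numbers; the conclusion $w^b\in\mathbb{A}\Rightarrow w\in\mathbb{A}$ for $b\in\mathbb{Z}\setminus\{0\}$ is then immediate.
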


We now give an application of Proposition~\ref{onlyAlg} to Question~\ref{qXorYinT} in the case $X=Y$.

\begin{corollary} \label{F0QQ_trans}
Let $T,Q\in(0,1)$ satisfy $T^{\,T}=Q^{\,Q}$ and $T\neq Q\mem{Q}$. Then $T\mem{T}$ if and only if $x(n) \neq 1/Q \neq y(n)$ for all $n\mem{N}$. In particular, $T\mem{T}$ if $1/Q\mem{N}\setminus\{1,2,4\}$.
\end{corollary}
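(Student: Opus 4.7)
The plan is to reduce the equation $T^T=Q^Q$ to the setting of Proposition~\ref{onlyAlg} via the substitution $u:=1/T$, $v:=1/Q$. Taking logarithms of $T^T=Q^Q$ gives $T\log T=Q\log Q$, which is the same as $(\log v)/v=(\log u)/u$, equivalently $u\log v=v\log u$, i.e.\ $u^v=v^u$ (Euler's trick already used in Section~3). Since $T,Q\in(0,1)$ with $T\neq Q$, we have $u,v\in(1,\infty)$ with $u\neq v$, and $v\in\mathbb{Q}$ because $Q\in\mathbb{Q}$. By Lemma~\ref{xy=yx=z}, the curve $u^v=v^u$ with $1<u<v$ (or $1<v<u$) is parametrized by $(x(t),y(t))$, so $\{u,v\}=\{x(t),y(t)\}$ for a unique $t>0$, and $1/Q$ must be either $x(t)$ or $y(t)$.

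For the ``only if'' direction of the equivalence, I would argue contrapositively: suppose $T\mem{A}$. Then $u=1/T$ is algebraic, $v=1/Q$ is rational (hence algebraic), and $u^v=v^u$ with $u\neq v$, so Proposition~\ref{onlyAlg}(i) gives $t\mem{Q}^+$. Writing $t=a/b$ in lowest terms with $b\geq 1$, Proposition~\ref{onlyAlg}(ii) shows that if $b>1$ then both $x(t),y(t)$ are irrational, contradicting $v\mem{Q}$. Hence $t=n\mem{N}$, and $1/Q\in\{x(n),y(n)\}$.

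For the ``if'' direction: suppose $1/Q=x(n)$ for some $n\mem{N}$ (the case $1/Q=y(n)$ is symmetric). By Lemma~\ref{xy=yx=z}, the only $u\in(1,\infty)$ with $u^v=v^u$ and $u\neq v$ for $v=x(n)$ is $u=y(n)$. So $1/T=y(n)$, and Proposition~\ref{onlyAlg}(ii) gives $1/T\mem{Q}$, hence $T\mem{Q}\subset\mathbb{A}$, i.e.\ $T\notin\mathbb{T}$. This establishes the biconditional.

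For the ``In particular'' clause, I would simply verify that among all values $x(n),y(n)$ with $n\mem{N}$, the only natural numbers are $x(1)=2$ and $y(1)=4$. Indeed $x(n)=(n+1)^n/n^n$ and $y(n)=(n+1)^{n+1}/n^{n+1}$ are in lowest terms since $\gcd(n,n+1)=1$, so for $n\geq 2$ the denominator exceeds $1$ and neither is an integer. Consequently $1/Q\mem{N}\setminus\{1,2,4\}$ excludes $1/Q$ from $\{x(n),y(n):n\mem{N}\}$, and the biconditional yields $T\mem{T}$. I expect no serious obstacle; the only subtle point is invoking Proposition~\ref{onlyAlg}(ii) \emph{with the rationality of $v$} to force $b=1$, rather than merely algebraicity, which only gives $t\mem{Q}$.
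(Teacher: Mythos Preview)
Your proof is correct and follows essentially the same approach as the paper: both reduce $T^T=Q^Q$ to $(1/T)^{1/Q}=(1/Q)^{1/T}$ and then invoke Proposition~\ref{onlyAlg}. The paper compresses the argument into a single sentence (``Using Proposition~\ref{onlyAlg}, the `if and only if' statement follows''), whereas you correctly unpack that both parts of the proposition are needed---part~(i) to get $t\in\mathbb{Q}^+$ from $1/T\in\mathbb{A}$, and part~(ii) together with $1/Q\in\mathbb{Q}$ to force $t\in\mathbb{N}$; your final remark about this subtlety is exactly right. Your verification of the ``In particular'' clause via $\gcd(n,n+1)=1$ is also the same as the paper's (stated there more tersely).
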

\begin{proof}
It is easy to see the equivalences
\begin{equation*}
   T^{\,T}=Q^{\,Q} \quad \Longleftrightarrow \quad (1/T)^{1/Q}=(1/Q)^{1/T}
\end{equation*}
and, as $\mathbb{A}$ is a field, $T\mem{T} \Longleftrightarrow 
1/T\mem{T}$. Using Proposition~\ref{onlyAlg}, the ``if and only if'' statement follows. Since $n\mem{N}$ and $1/Q\mem{N}\setminus\{2,4\}$ imply $x(n)\neq 1/Q\neq y(n)$, the final statement also holds. 
\end{proof}

For example, taking $Q=4/9=1/x(2)$ leads to $(4/9)^{4/9}=(8/27)^{8/27}\mem{A}$, while $Q=1/3$ and $2/3$ give
\begin{align*}
    (1/3)^{1/3} &= T_1^{\,T_1}, \qquad T_1 \mem{T};&
    (2/3)^{2/3} &= T_2^{\,T_2}, \qquad T_2 \mem{T}.
\end{align*}
Here $T_1=0.40354\dots$ and $T_2=0.13497\dots$ can be calculated by computing solutions to the equations $x^x=(1/3)^{1/3}$ and $x^x=(2/3)^{2/3}$, using starting values of $x$ in the intervals $(e^{-1},1)$ and $(0,e^{-1})$, respectively.

\section{Appendix: The infinite power tower functions}
We use the Gelfond-Schneider and Hermite-Lindemann Theorems to find algebraic, irrational, and transcendental values of three classical functions, whose analytic properties were studied by Euler~\cite{euler2}, Eisenstein~\cite{eisenstein}, and many others.

\begin{definition} \label{defh}
The \emph{infinite power tower} (or \emph{iterated exponential}) \emph{function} $h(x)$ is the limit of the sequence of \emph{finite power towers} (or \emph{hyperpowers}) $x,\ x^{\,x},\ x^{\,x^{\,x}},\ \ldots$. For $x>0$, the sequence converges if and only if (see \cite{anderson}, Cho and Park~\cite{cho}, De Villiers and Robinson~\cite{devilliers}, Finch~\cite[p. 448]{finch}, and~\cite{knoebel})
\begin{equation*} 
    0.06598\ldots = e^{-e} \leq x \leq e^{1/e} = 1.44466\ldots,
\end{equation*}
and in that case we write
\begin{equation*}
   h(x) = x^{\,x^{\,x^{\,\cdot^{\cdot^{\cdot}}}}}.
\end{equation*}
\end{definition}

By substitution, we see that $h$ satisfies the identity

\begin{equation}
   x^{\,h(x)} = h(x). \label{eqh=x^h}
\end{equation}

\noindent
Thus $y=h(x)$ is a solution of the equations $x^y=y$ and, hence, $x=y^{\,1/y}$. In other words, $g(h(x)) = x$, where $g(u)=u^{\,1/u}$ for $u>0$. Replacing $x$ with $g(x)$, we get $g(h(g(x)))=g(x)$ if $g(x)\in[e^{-e},e^{1/e}]$. Since $g$ is one-to-one on $(0,e]$, and since $h$ is bounded above by $e$ (see~\cite{knoebel} for a proof) and $g([e,\infty))\subset(1,e^{1/e}]$ (see Figure~\ref{yy14}), it follows that
\begin{align}
   h(g(x)) &= x \qquad (e^{-1}\leq x\leq e), &
   h(g(x)) < x \qquad (e<x<\infty). \label{h=,h<}
\end{align}
Therefore, $h$ is a partial inverse of $g$, and is a bijection (see Figure~\ref{hx})
\begin{equation*}
    h:[e^{-e},e^{1/e}]\ \to[e^{-1},e].
\end{equation*}

\begin{figure}[!htb]
\centering
\includegraphics[scale=0.7]{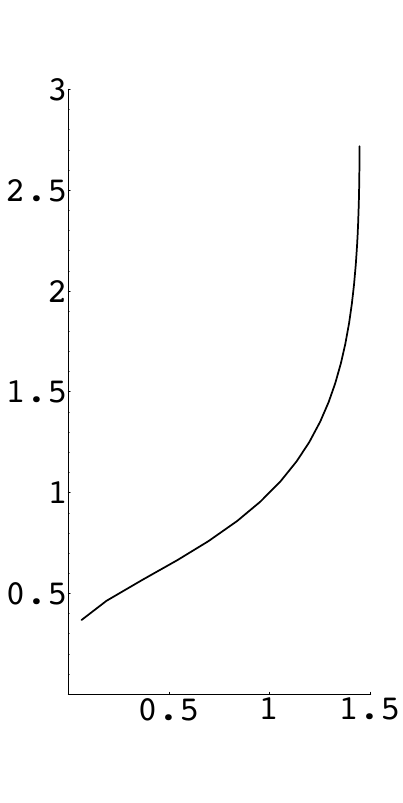}
\caption{$y=h(x) = x^{\,x^{\,x^{\,\cdot^{\cdot^{\cdot}}}}}$ }
\label{hx}
\end{figure}

For example, taking $x=1/2$ and $2$ in \eqref{h=,h<} gives
\begin{align} \label{Ex_h(A)inA}
(1/4)^{(1/4)^{(1/4)^{\cdot^{\cdot^{\cdot}}}}}&=\frac12, & \sqrt{2}^{\sqrt{2}^{\sqrt{2}^{\cdot^{\cdot^{\cdot}}}}}&=2,
\end{align}
while choosing $x=3$ yields
\begin{equation*}
   \sqrt[3]{3}^{\sqrt[3]{3}^{\sqrt[3]{3}^{\cdot^{\cdot^{\cdot}}}}} < 3.
\end{equation*} 

Recall that the Hermite-Lindemann Theorem says that if $A$ is any nonzero algebraic number, then $e^A$ is transcendental. We claim that \emph{if in addition $A$ lies in the interval $(-e,e^{-1})$, then $h(e^A)$ is also transcendental}. To see this, set $x=e^A$ and $y=h(x)$. Then \eqref{eqh=x^h} yields $e^{Ay}=y$, and Theorem \ref{Lindemann} implies $y\mem{T}$, proving the claim. For instance,
\begin{equation} \label{Ex_h(T)inT}
   \sqrt[3]{e}^{\sqrt[3]{e}^{\sqrt[3]{e}^{\cdot^{\cdot^{\cdot}}}}} = 1.85718\dots\mem{T},
\end{equation} 
where the value of $h(\sqrt[3]{e})$ can be obtained by computing a solution to $x^{1/x}=\sqrt[3]{e}$, using a starting value of $x$ between $e^{-1}$ and $e$.

Here is an application of Proposition~\ref{prop1}.

\begin{corollary} \label{Prop_h(A)inT}
Given $A\in\left[e^{-e},e^{1/e}\right]$, if either $A^n\mem{A}\setminus\mathbb{Q}$ for all $n\mem{N}$, or $A\mem{Q}\setminus \{1/4,1\}$, then
\begin{equation}\label{Ex_h(A)inT}
   A^{A^{A^{\cdot^{\cdot^{\cdot}}}}}\mem{T}.
\end{equation} 
\end{corollary}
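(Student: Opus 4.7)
The plan is to set $T := h(A)$ and exploit the fixed-point identity \eqref{eqh=x^h}, which gives $A^T = T$, or equivalently $A = T^{1/T}$. Since $h$ is a bijection $[e^{-e}, e^{1/e}] \to [e^{-1}, e]$, the number $T$ is a positive real, and in both hypotheses $A$ is algebraic with $A \neq 0, 1$, whence $T \neq 0, 1$ as well (the value $T = 1$ would force $A = 1$, which is excluded). I will then rule out $T \in \mathbb{A}$ by the same two-pronged strategy as in Proposition~\ref{prop1}: first eliminate $T \in \mathbb{A}\setminus\mathbb{Q}$ via Gelfond-Schneider, then eliminate $T \in \mathbb{Q}^+$ by a Diophantine argument.

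Suppose for contradiction $T \in \mathbb{A}$. If $T \in \mathbb{A}\setminus\mathbb{Q}$, then $1/T$ is also algebraic and irrational, so Theorem~\ref{GelSchn} applied to $\alpha = T \neq 0, 1$ and $\beta = 1/T$ makes $A = T^{1/T}$ transcendental, contradicting $A \in \mathbb{A}$. Hence $T \in \mathbb{Q}^+$. Writing $T = p/q$ with $p, q \in \mathbb{N}$ and $\gcd(p, q) = 1$, and raising $A = (p/q)^{q/p}$ to the $p$-th power, I obtain $A^p = p^q/q^q \in \mathbb{Q}$.

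In case~(i), this directly contradicts the hypothesis $A^p \in \mathbb{A}\setminus\mathbb{Q}$. In case~(ii), write $A = m/n$ in lowest terms; then $m^p/n^p = p^q/q^q$ is an equality of fractions each in lowest terms (the right-hand side because $\gcd(p, q) = 1$), forcing $m^p = p^q$ and $n^p = q^q$. The FTA argument already used in Lemma~\ref{Lemma_Q^Q} and Proposition~\ref{onlyAlg}, applied to $m^p = p^q$ with $\gcd(p, q) = 1$, shows that $p = c^p$ for some $c \in \mathbb{N}$; the inequality $c^p > p$ for $c \geq 2, p \geq 1$ forces $c = p = 1$, whence $m = 1$ and $n = q^q$. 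Hence $A = 1/q^q$ for some $q \in \mathbb{N}$. Since $1/3^3 = 1/27 < e^{-e}$, the constraint $A \geq e^{-e}$ forces $q \in \{1, 2\}$, so $A \in \{1, 1/4\}$, both excluded by hypothesis. Contradiction.

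The step I expect to be the main obstacle is the case~(ii) Diophantine analysis: extracting $p = 1$, $m = 1$, $n = q^q$ from the coupled system $m^p = p^q$, $n^p = q^q$ under coprimality, and then pinning down the two exceptional values $A = 1$ and $A = 1/4$ by using the convergence range $[e^{-e}, e^{1/e}]$ of $h$. The Gelfond-Schneider reduction and the use of the fixed-point identity are both essentially routine.
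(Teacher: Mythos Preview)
Your argument is correct. The underlying mechanism---Gelfond--Schneider to rule out $T\in\mathbb{A}\setminus\mathbb{Q}$, then an FTA/Diophantine analysis to rule out $T\in\mathbb{Q}^+$---is exactly what drives the paper's proof as well, but the paper packages it more compactly. Instead of working with $A=T^{1/T}$ and redoing the case analysis, the paper observes that the identity $A^{h(A)}=h(A)$ can be rewritten as $1/A=(1/h(A))^{1/h(A)}$, which is precisely the form $T_1^{T_1}=A_1$ treated in Proposition~\ref{prop1}. One then only needs to check that the hypotheses on $A$ translate into conditions~(i) or~(ii) of that proposition for $A_1=1/A$; in particular, the range $1/A\in[e^{-1/e},e^e]$ meets $\{n^n:n\in\mathbb{N}\}$ only at $1$ and $4$, recovering the two exceptions $A\in\{1,1/4\}$. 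Your route is a direct unrolling of this reduction, and your Diophantine step (forcing $p=c^p$, hence $p=1$) is essentially a reciprocal variant of Lemma~\ref{Lemma_Q^Q}; the paper's reduction buys brevity and modularity, while your version is self-contained.
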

\begin{proof}
From \eqref{eqh=x^h}, we have $A_1 := 1/A=\left(1/h(A)\right)^{1/h(A)}$. The hypotheses imply that $A_1$ satisfies condition (i) or (ii) of Proposition~\ref{prop1}. Thus $1/h(A)$ and, hence, $h(A)$ are transcendental.
\end{proof}

For example, $h\left((\sqrt{2}+1)/2\right)=1.27005\dots\mem{T}$ and
\begin{equation*}
   (1/2)^{(1/2)^{(1/2)^{\cdot^{\cdot^{\cdot}}}}}=0.64118\dots\mem{T}.
\end{equation*} 

It is easy to give an infinite power tower analog to the examples in Section 2 of powers $T^T\mem{A}$ with $T\mem{T}$. Indeed, Theorem~\ref{GelSchn} and relation~\eqref{eqh=x^h} imply that \emph{if \mbox{$A \in (\mathbb{A}\setminus\mathbb{Q})\cap (e^{-1},e)$}, then}
\begin{align} \label{Ex_h(T)inA}
   T&:=1/A^{A}\mem{T}, &
   T^{T^{T^{\cdot^{\cdot^{\cdot}}}}}&= 1/A\mem{A}.
\end{align} 

Notice that \eqref{Ex_h(A)inA}, \eqref{Ex_h(T)inT}, \eqref{Ex_h(A)inT}, \eqref{Ex_h(T)inA} represent the four possible cases $(x,h(x))\in\mathbb{A}\times\mathbb{A}$, $\mathbb{T}\times\mathbb{T}$, $\mathbb{A}\times\mathbb{T}$, $\mathbb{T}\times\mathbb{A}$, respectively.

We now define two functions each of which extends $h$ to a larger domain.

\begin{definition} \label{defhohe}
The \emph{odd infinite power
tower function} $h_o(x)$ is the limit of the sequence of finite power towers of odd height:
\begin{equation*}
  x,\ x^{\,x^{\,x}},\ x^{\,x^{\,x^{\,x^{\,x}}}},\ \ldots\ \longrightarrow\ h_o(x).
\end{equation*}
Similarly, the \emph{even infinite power
tower function} $h_e(x)$ is defined as the limit of the sequence of finite power towers of even height:
\begin{equation*}
  x^{\,x},\ x^{\,x^{\,x^{\,x}}},\ x^{\,x^{\,x^{\,x^{\,x^{\,x}}}}},\ \ldots\ \longrightarrow\ h_e(x).
\end{equation*}
Both sequences converge on the interval $0 < x \leq e^{\,1/e}$ (for a proof, see~\cite{anderson} or~\cite{knoebel}).
\end{definition}

It follows from Definition~\ref{defhohe} that $h_o$ and $h_e$ satisfy the identities
\begin{align} \label{ho=x^x^ho}
   x^{\,x^{\,h_o(x)}}&= h_o(x), &
   x^{\,x^{\,h_e(x)}}&= h_e(x)
\end{align}
and the relations
\begin{align} \label{ho=x^he}
   x^{\,h_e(x)}&= h_o(x), &
   x^{\,h_o(x)}&= h_e(x)
\end{align}
on $(0,e^{\,1/e}]$. From \eqref{ho=x^x^ho}, we see that $y=h_o(x)$ and $y=h_e(x)$ are solutions of the equation $y=x^{\,x^{\,y}}$. So is $y=h(x)$, since $y=x^y$ implies $y=x^{\,x^{\,y}}$.

It is proved in \cite{anderson} and \cite{knoebel} that on the subinterval $[e^{-e},e^{1/e}]\subset(0,e^{\,1/e}]$ the three infinite power tower functions $h,\ h_o,\ h_e$ are all defined and are equal, but on the subinterval $\left(0, e^{-e}\right)$ only $h_o$ and $h_e$ are defined, and they satisfy the inequality
\begin{equation}
   h_o(x)<h_e(x) \qquad (0<x<e^{-e})
\label{ho<he}
\end{equation}
and are surjections (see Figure~\ref{knoebel})
\begin{align*}
   h_o:\ &(0,e^{1/e}] \to (0,e], &
   h_e:\ &(0,e^{1/e}] \to [e^{-1},e].
\end{align*}

\begin{figure}[!htb]
\centering
\includegraphics[scale=0.6]{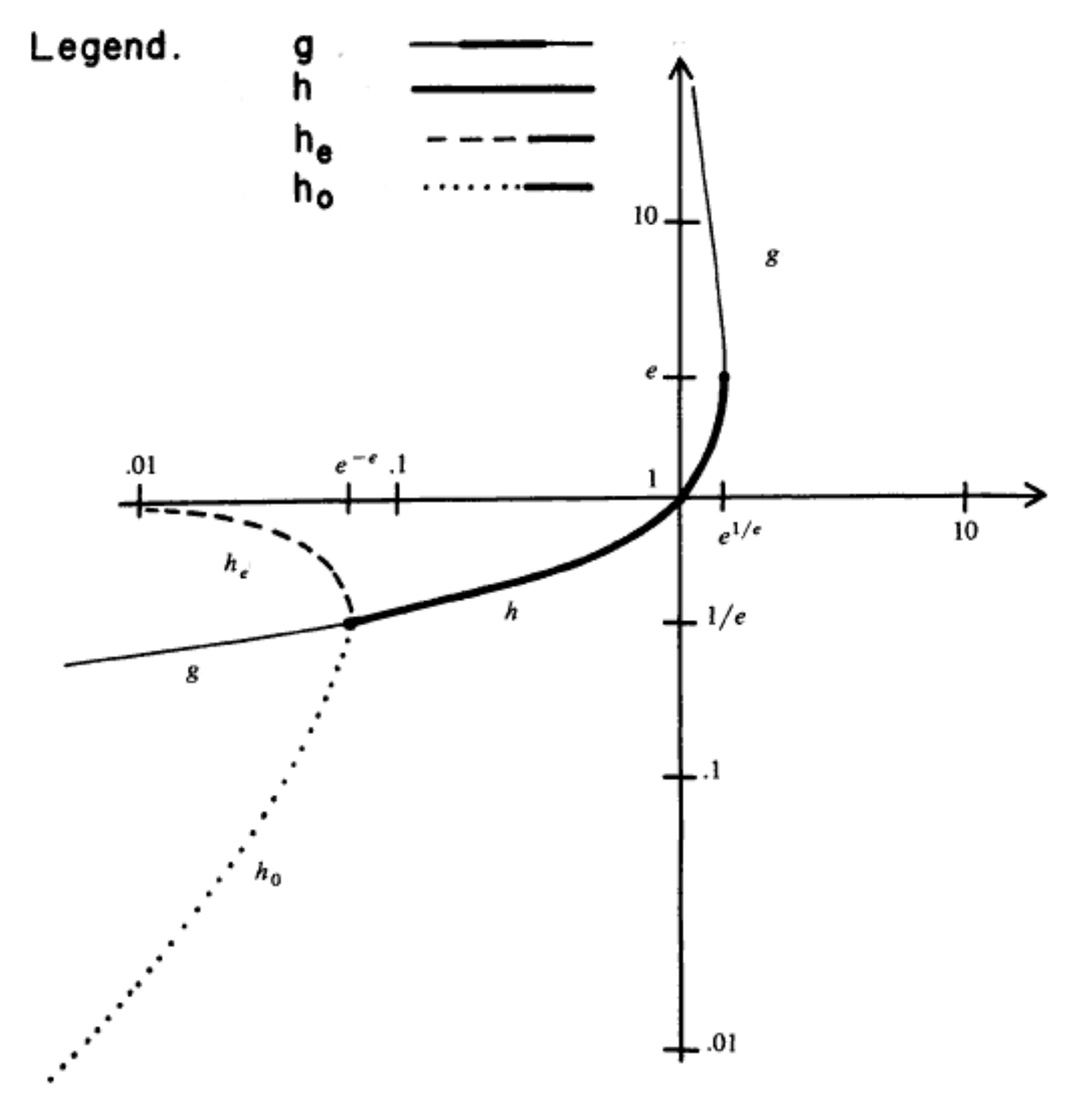}
\caption{(from \cite{knoebel}) $x=g(y)$, $y=h(x)$, $y=h_e(x)$, $y=h_o(x)$ }
\label{knoebel}
\end{figure}

In order to give an analog for $h_o$ and $h_e$ to Corollary~\ref{Prop_h(A)inT} on $h$, we require a lemma.

\begin{lemma} \label{QQQ'}
Assume $Q,Q_1\mem{Q}^+$. Then
\begin{equation}
   Q^{\,Q^{\,Q_1}}=Q_1 \label{eqQQQ'}
\end{equation}
if and only if $(Q,Q_1)$ is equal to either $(1/16,1/2)$ or $(1/16,1/4)$ or $(1/n^{\,n},1/n)$, for some $n\mem{N}$.
\end{lemma}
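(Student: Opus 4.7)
The ``if'' direction reduces to three direct computations: $(1/n^n)^{(1/n^n)^{1/n}}=(1/n^n)^{1/n}=1/n$; $(1/16)^{(1/16)^{1/2}}=(1/16)^{1/4}=1/2$; $(1/16)^{(1/16)^{1/4}}=(1/16)^{1/2}=1/4$.

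For the converse, my plan is to introduce the auxiliary $Q_2:=Q^{Q_1}$, so that \eqref{eqQQQ'} becomes the symmetric system $Q^{Q_1}=Q_2$ and $Q^{Q_2}=Q_1$. Here $Q_2$ is algebraic since $Q,Q_1\mem{Q}^+$, and if $Q\ne 1$, applying Theorem~\ref{GelSchn} to $Q^{Q_2}=Q_1\mem{Q}$ forbids $Q_2$ from being irrational algebraic; thus $Q_2\mem{Q}^+$ as well. (The degenerate case $Q=1$ forces $Q_1=1$, which is the $n=1$ member of the $(1/n^n,1/n)$ family.) I then split into two subcases.

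If $Q_1=Q_2$, then $Q^{Q_1}=Q_1$. Writing $Q_1=a/b$ and $Q=p/q$ in lowest terms and raising to the $b$-th power, coprimality splits the resulting identity into $p^a=a^b$ and $q^a=b^b$. A prime-by-prime valuation argument in the spirit of Lemma~\ref{Lemma_Q^Q}---for every prime $\ell\mid a$, $\gcd(a,b)=1$ forces $a\mid v_\ell(a)$, contradicting $\ell^{v_\ell(a)}\le a$---then yields $a=1$, so $(Q,Q_1)=(1/n^n,1/n)$ with $n=b$.

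If instead $Q_1\ne Q_2$, monotonicity of $Q^t$ together with the system forces $0<Q<1$, and hence $Q_1,Q_2\in(0,1)\cap\mathbb{Q}$. Taking logarithms in both equations and eliminating $\log Q$ gives $Q_1\log Q_1=Q_2\log Q_2$, i.e., $Q_1^{Q_1}=Q_2^{Q_2}$. As in the proof of Corollary~\ref{F0QQ_trans}, this is equivalent to $A_1^{A_2}=A_2^{A_1}$ with $A_i:=1/Q_i\in\mathbb{Q}\cap(1,\infty)$ and $A_1\ne A_2$, so Proposition~\ref{onlyAlg}(ii) pins down $\{A_1,A_2\}=\{x(n),y(n)\}$ for some $n\mem{N}$. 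Solving $Q^{Q_1}=Q_2$ and using $x(n)^{y(n)}=y(n)^{x(n)}$ then yields $1/Q=x(n)^{y(n)}=((n+1)/n)^{(n+1)^{n+1}/n^n}$. The main obstacle is showing that this is rational only at $n=1$ (where it equals $16$, yielding the two sporadic pairs). For $n\ge 2$ the exponent $(n+1)^{n+1}/n^n$ is already in lowest terms by $\gcd(n,n+1)=1$, and rationality of $((n+1)/n)^{c/d}$ with $\gcd(c,d)=1$ and $d=n^n$ would force, by prime-by-prime comparison of the resulting identity $r^d=(n+1)^c/n^c$ with $r\in\mathbb{Q}$, that $d\mid v_\ell(n+1)$ for every prime $\ell\mid n+1$; this gives $n+1\ge 2^{n^n}$, impossible for $n\ge 2$. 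Hence $n=1$, completing the classification.
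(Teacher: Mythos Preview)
Your proof is correct, but the route differs from the paper's. After the common first move (Gelfond--Schneider forces $Q^{Q_1}\in\mathbb{Q}$), the paper works directly with the single equation: writing $Q=a/b$, $Q_1=m/n$ in lowest terms, the FTA gives $a=a_1^{\,n}$, $b=b_1^{\,n}$, and substituting yields the pair $m^{\,b_1^{\,m}}=a_1^{\,na_1^{\,m}}$, $n^{\,b_1^{\,m}}=b_1^{\,na_1^{\,m}}$; a valuation argument (in the same spirit as your Case~1) shows $m=1$, whence $a_1=1$ and one is left with the \emph{integer} equation $n^{\,b_1}=b_1^{\,n}$, whose solutions $(2,4),(4,2),(k,k)$ immediately give the three families. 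Your argument instead symmetrizes via $Q_2:=Q^{Q_1}$ and bifurcates: the diagonal case $Q_1=Q_2$ recovers $(1/n^n,1/n)$ by essentially the same valuation idea, while the off-diagonal case passes through $Q_1^{Q_1}=Q_2^{Q_2}$ to land on the \emph{rational} solutions of $x^y=y^x$ (Proposition~\ref{onlyAlg}), at the cost of an extra irrationality check to exclude $n\ge 2$. The paper's path is a little shorter at the end because it reaches an integer Diophantine equation directly; your path makes the underlying symmetry and the link to Corollary~\ref{F0QQ_trans} more transparent.
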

\begin{proof}
The ``if'' part is easily verified. To prove the ``only if'' part, note first that \eqref{eqQQQ'} and Theorem~\ref{GelSchn} imply $Q^{\,Q_1}\mem{Q}$. Then, writing $Q=a/b$ and $Q_1=m/n$, where $a,b,m,n\mem{N}$ and $\gcd(a,b) = \gcd(m,n) = 1$, the FTA implies $a = a_1^{\,n}$ and $b = b_1^{\,n}$, for some $a_1,b_1\mem{N}$. From \eqref{eqQQQ'} we infer that $m^{\,b_1^{\,m}} = a_1^{\,na_1^{\,m}}$ and $n^{\,b_1^{\,m}} = b_1^{\,na_1^{\,m}}$.\\
\indent
We show that $m = 1$. If $m \neq 1$, then some prime $p \mid m$, and hence $p \mid a_1$. Write $m = m'p^{\,r}$ and $a_1 = a_2p^{\,s}$, where $r,s\mem{N}$ and $\gcd(m',p) = \gcd(a_2,p) = 1$. Substituting into $m^{\,b_1^{\,m}} = a_1^{\,na_1^{\,m}}$, we deduce that $rb_1^{\,m} = sna_1^{\,m}$. Since $\gcd(a_1,b_1)=1$, we have $a_1^{\,m} \mid r$. But $a_1^{\,m}= a_1^{\,m'p^{\,r}}>r$, a contradiction. Therefore, $m = 1$.\\
\indent
It follows that $a_1 = 1$, and hence $n^{\,b_1} = b_1^{\,n}$.  Proposition~\ref{onlyAlg} then implies that $(n,b_1) = (2,4)$ or $(n,b_1) = (4,2)$ or $n = b_1$. The lemma follows.
\end{proof}

\begin{proposition} \label{hohe}
We have $h_o(1/16)=1/4$ and $h_e(1/16)=1/2$. On the other hand, if $Q\mem{Q}\cap\left(0,e^{-e}\right]$ but $Q\neq 1/16$, then $h_o(Q)$ and $h_e(Q)$ are both irrational, and at least one of them is transcendental.
\end{proposition}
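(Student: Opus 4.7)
The proof splits into three stages. First, the explicit values $h_o(1/16)=1/4$ and $h_e(1/16)=1/2$ follow by direct inspection of the mutual relations~\eqref{ho=x^he}: the pair $(1/4,1/2)$ forms a $2$-cycle for $y\mapsto(1/16)^y$, since $(1/16)^{1/2}=1/4$ and $(1/16)^{1/4}=1/2$, and the convergence results cited from~\cite{anderson,knoebel} identify this cycle with $\bigl(h_o(1/16),h_e(1/16)\bigr)$.

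Next I prove irrationality of $h_o(Q)$ by contradiction; the argument for $h_e(Q)$ is entirely symmetric since both functions satisfy the same identity~\eqref{ho=x^x^ho}. Suppose $h_o(Q)=Q_1\mem{Q}^+$. Then the first identity of~\eqref{ho=x^x^ho} reads $Q^{Q^{Q_1}}=Q_1$ with $Q,Q_1\mem{Q}^+$, so Lemma~\ref{QQQ'} applies. Since $Q\neq 1/16$ by hypothesis, the two exceptional pairs are ruled out, leaving $Q=1/n^{\,n}$ and $Q_1=1/n$ for some $n\mem{N}$. Substituting into~\eqref{ho=x^he} gives
\begin{equation*}
   h_e(Q)=Q^{h_o(Q)}=\bigl(1/n^{\,n}\bigr)^{1/n}=1/n=h_o(Q).
\end{equation*}
The constraint $Q=1/n^{\,n}\in(0,e^{-e}]$ forces $n\geq 3$: the values $n=1,2$ give $Q\in\{1,1/4\}$, both exceeding $e^{-e}$, while $n^{\,n}=e^{\,e}$ has no integer solution. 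Hence $Q\in(0,e^{-e})$, on which~\eqref{ho<he} gives the strict inequality $h_o(Q)<h_e(Q)$, contradicting the equality just derived.

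Finally, to show at least one of $h_o(Q),h_e(Q)$ is transcendental, I suppose both are algebraic. By the preceding step they are also irrational. In the relation $Q^{h_o(Q)}=h_e(Q)$ from~\eqref{ho=x^he}, the base $Q$ is a rational in $(0,1)$, in particular algebraic and different from $0$ and $1$, and the exponent $h_o(Q)$ is an algebraic irrational, so Gelfond--Schneider (Theorem~\ref{GelSchn}) forces the left-hand side to be transcendental. This contradicts the assumed algebraicity of $h_e(Q)$.

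I do not foresee a serious obstacle: the whole argument is essentially a repackaging of Lemma~\ref{QQQ'} together with the strict inequality $h_o<h_e$ on $(0,e^{-e})$ and one invocation of Gelfond--Schneider. The only real care needed is to verify that the surviving rational pair $(Q,h_o(Q))=(1/n^{\,n},1/n)$ cannot occur at the boundary point $e^{-e}$ where the strict inequality would degenerate, and this reduces to the trivial observation that $n^{\,n}=e^{\,e}$ has no natural-number solution.
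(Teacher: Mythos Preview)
Your argument is correct and follows the paper's approach: Lemma~\ref{QQQ'} together with the strict inequality~\eqref{ho<he} for irrationality, then the relation~\eqref{ho=x^he} and Gelfond--Schneider for transcendence. For the first statement the paper is more explicit than your appeal to ``convergence results'': it uses that $(1/16)^{(1/16)^y}=y$ has exactly three solutions (the fixed point $y_0\in(1/4,1/2)$ and the $2$-cycle $\{1/4,1/2\}$), so that~\eqref{ho<he} rules out $h_o(1/16)=h_e(1/16)=y_0$ and pins $(h_o,h_e)$ to $(1/4,1/2)$ in that order.
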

\begin{proof}
Since $1/16<e^{-e}$, the equation
\begin{equation*}
   (1/16)^{\,(1/16)^{\,y}}=y
\end{equation*}
has exactly three solutions (see~\cite{knoebel} and Figure~\ref{knoebel}), namely, $y= 1/4,\ 1/2$, and $y_0$, say, where $1/4<y_0<1/2$. By~\eqref{ho=x^x^ho} and \eqref{ho<he}, two of the solutions are $y=h_o(1/16)$ and $h_e(1/16)$. In view of~\eqref{ho<he}, either $h_o(1/16)=1/4$ or $h_o(1/16)=y_0$. But the latter would imply that $h_e(1/16)=1/2$, which leads by \eqref{ho=x^he} to $y_0=(1/16)^{\,1/2}=1/4$, a contradiction. Therefore $h_o(1/16)=1/4$. Then \eqref{ho=x^he} implies $h_e(1/16)=(1/16)^{\,1/4}=1/2$, proving the first statement.\\
\indent
To prove the second, suppose $Q_1:=h_o(Q)$ is rational. Then \eqref{ho=x^x^ho} and\linebreak[4] Lemma~\ref{QQQ'} imply $(Q,Q_1)=(1/n^{\,n},1/n)$, for some $n\mem{N}$. Hence $Q^{\,Q_1}=Q_1$. But from \eqref{ho=x^he} and \eqref{ho<he} we see that $Q^{\, h_o(Q)}= h_e(Q)>h_o(Q)$, so that $Q^{\,Q_1}>Q_1$, a contradiction. Therefore, $h_o(Q)$ is irrational. The proof that $h_e(Q)\not\mem{Q}$ is similar. Now \eqref{ho=x^he} and Theorem~\ref{GelSchn} imply that $\{h_o(Q),h_e(Q)\}\cap\mathbb{T}\neq\emptyset$.
\end{proof}

For example, the numbers $h_o(1/17)=0.20427\dots$ and $h_e(1/17)=0.56059\dots$ are both irrational, and at least one is transcendental. The values were computed directly from Definition~\ref{defhohe}.

\begin{conj} \label{hoheconj}
In the second part of Proposition \ref{hohe} a stronger conclusion holds, namely, that both $h_o(Q)$ and $h_e(Q)$ are transcendental.
\end{conj}

As with Conjecture \ref{conj}, we can give a conditional proof of Conjecture \ref{hoheconj}. Namely, in view of Proposition~\ref{hohe} and the identities~\eqref{ho=x^x^ho}, Conjecture \ref{hoheconj} is a special case of the following conditional result~\cite[Theorem~4]{ms}.

\begin{theorem}
Assume Schanuel's conjecture and let $\alpha\neq0$~and~$z$ be complex numbers, with $\alpha$ algebraic and $z$~irrational. If $\alpha^{\,\alpha^{\,z}}=z,$ then $z$ is transcendental.
\end{theorem}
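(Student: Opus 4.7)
The plan is to argue by contradiction. Assume $z$ is algebraic; combined with the hypothesis that $z$ is irrational, this places $z$ in $\mathbb{A}\setminus\mathbb{Q}$. The case $\alpha=1$ would force $z=1$, so in fact $\alpha\mem{A}\setminus\{0,1\}$, and the Gelfond-Schneider theorem gives $\alpha^z\mem{T}$. Fix any branch of $\log\alpha$; by the very definition of the power tower, $\alpha^z=\exp(z\log\alpha)$ and $\alpha^{\alpha^z}=\exp(\alpha^z\log\alpha)$, so the hypothesis $\alpha^{\alpha^z}=z$ is equivalent to the linear identity
\begin{equation*}
   \log z \;=\; \alpha^z\log\alpha,
\end{equation*}
where $\log z$ denotes the branch of the logarithm of $z$ determined by the right-hand side. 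This relation is what drives the whole proof.

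The key step is to apply Schanuel's conjecture to the triple
\begin{equation*}
   u_1 := \log\alpha, \qquad u_2 := z\log\alpha, \qquad u_3 := \alpha^z\log\alpha \;\;(=\log z),
\end{equation*}
whose exponentials are the algebraic numbers $\alpha$ and $z$ together with the transcendental number $\alpha^z$. First I would verify $\mathbb{Q}$-linear independence: a relation $au_1+bu_2+cu_3=0$ becomes $(a+bz+c\alpha^z)\log\alpha=0$, and since $\log\alpha\neq 0$ it reduces to $a+bz+c\alpha^z=0$; the transcendence of $\alpha^z$ forces $c=0$, and then the irrationality of $z$ forces $a=b=0$. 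Schanuel's conjecture therefore yields
\begin{equation*}
   \mathrm{trdeg}_{\mathbb{Q}}\,\mathbb{Q}(u_1,u_2,u_3,\alpha,\alpha^z,z)\;\geq\;3.
\end{equation*}

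On the other side, I would bound the transcendence degree from above. Because $\alpha$ and $z$ are algebraic, and because $u_2=z\log\alpha$ lies in $\mathbb{Q}(z)(\log\alpha)$ while $u_3=\alpha^z\log\alpha$ lies in $\mathbb{Q}(\log\alpha,\alpha^z)$ via the key identity, the whole field in question is contained in $\mathbb{Q}(\alpha,z)(\log\alpha,\alpha^z)$, whose transcendence degree over $\mathbb{Q}$ is at most $2$. This contradicts the Schanuel lower bound of $3$, so $z\mem{T}$ as required.

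The main obstacle is choosing a Schanuel input with the right balance: the three quantities must be $\mathbb{Q}$-linearly independent (one is essentially $\log\alpha$, one is its product with the exponent $z$ of the inner tower, one is the log of the outer value $z$), yet the algebraic collapse of the resulting field down to transcendence degree at most two must be forced by the tower identity $\log z=\alpha^z\log\alpha$. A minor subtlety is to fix branches coherently so that this identity holds exactly rather than modulo $2\pi i$; this is automatic, since the identity was derived from the power-tower definition itself.
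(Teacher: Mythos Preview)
The present paper does not prove this theorem; it is quoted from the companion paper of Marques and Sondow (reference~[ms], Theorem~4) and stated here without argument, solely to supply a conditional proof of the preceding conjecture. So there is no in-paper proof to compare against.

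That said, your argument is correct and is exactly the natural Schanuel computation one expects. Assuming $z$ algebraic irrational, you rule out $\alpha=1$, invoke Gelfond--Schneider to get $\alpha^{z}\in\mathbb{T}$, and feed $u_1=\log\alpha$, $u_2=z\log\alpha$, $u_3=\alpha^{z}\log\alpha$ into Schanuel's conjecture. The $\mathbb{Q}$-linear independence check is clean (transcendence of $\alpha^{z}$ forces $c=0$; irrationality of $z$ then forces $a=b=0$), and the tower identity $u_3=\log z$ places all six quantities $u_1,u_2,u_3,\alpha,\alpha^{z},z$ inside $\mathbb{Q}(\alpha,z)(\log\alpha,\alpha^{z})$, which has transcendence degree at most $2$ over $\mathbb{Q}$ since $\alpha,z\in\mathbb{A}$. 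This contradicts the Schanuel lower bound of~$3$. Your handling of the branch issue---defining $\log z$ to be $\alpha^{z}\log\alpha$ once $\log\alpha$ is fixed---is the right way to avoid spurious $2\pi i$ ambiguities.
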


Some of our results on the arithmetic nature of values of $h,\ h_o$, and $h_e$ can be extended to other positive solutions to the equations $y=x^{\,y}$ and $y=x^{\,x^{\,y}}$. As with the rest of the paper, an extension to negative and complex solutions is an open problem (compare~\cite[Section~4]{knoebel} and~\cite{moulton}).

\paragraph{Acknowledgments.} We are grateful to Florian Luca, Wadim Zudilin, and the anonymous referee for valuable comments.

\end{document}